\providecommand{\U}[1]{\protect\rule{.1in}{.1in}}
\providecommand{\U}[1]{\protect\rule{.1in}{.1in}}
\theoremstyle{definition}
\newtheorem{definition}{Definition}
\newtheorem{remark}{Remark}
\newtheorem{problem}{Problem}
\theoremstyle{plain}
\newtheorem{proposition}{Proposition}
\newtheorem{theorem}{Theorem}
\newtheorem{lemma}{Lemma}
\begin{document}
\begin{frontmatter}
\title{Separation of Learning and Control for Cyber-Physical Systems\thanksref{footnoteinfo}} 
\thanks[footnoteinfo]{This work was supported by NSF under Grants CNS-2149520 and CMMI-2219761.}
\author[Paestum]{Andreas A. Malikopoulos}\ead{andreas@udel.edu}    
\address[Paestum]{Department of Mechanical Engineering, University of Delaware, 130 Academy Street, Newark, DE, 19716, USA}  
\begin{keyword}                           
Separation of learning and control, stochastic optimal control, information state, Markov decision theory.
\end{keyword}                             
\begin{abstract}                          
Most cyber-physical systems (CPS) encounter a large volume of data which is added to the system gradually in real time and not altogether in advance. In this paper, we provide a theoretical framework that yields optimal control strategies for such CPS at the intersection of control theory and learning. In the proposed framework, we use the actual CPS, i.e., the ``true" system that we seek to optimally control online, in parallel with a model of the CPS that is available. We then institute an information state for the system which does not depend on the control strategy. An important consequence of this independence is that for any given choice of a control strategy and a realization of the system’s variables until time $t,$ the information states at future times do not depend on the choice of the control strategy at time $t$ but only on the realization of the decision at time $t,$ and thus they are related to the concept of separation between estimation of the state and control. Namely, the future information states are separated from the choice of the current control strategy. Such control strategies are called separated control strategies. Hence, we can derive offline the optimal control strategy of the system with respect to the information state, which might not be precisely known due to model uncertainties or complexity of the system, and then use standard learning approaches  to learn the information state online while data are added gradually to the system in real time. We show that after the information state becomes known, the separated control strategy of the CPS model derived offline is optimal for the actual system. We illustrate the proposed framework in a dynamic system consisting of two subsystems with a delayed sharing information structure.
\end{abstract}
\end{frontmatter}

\section{Introduction} \label{sec:1}
\subsection{Motivation} \label{sec:1a}
Cyber-physical systems (CPS), in many instances, represent systems of subsystems with an informationally decentralized structure such as networked control systems, emerging mobility systems,  communication networks, digital twin, and internet of things.
Systems with informationally decentralized structures impose significant challenges compared to systems with centralized \textit{information structures}; see \citet{Schuppen2015}. 
The information structure in a system designates what information each subsystem knows about the status of the system and when. Several efforts on the characterization of information structures and their implications on optimality results have been reported in the literature over the years; see \citet{Witsenhausen1971a, Mahajan2012, Subramanian2020ApproximateIS}.
The information structure in a system stipulates the complexity, i.e., see \citet{papadimitriou1982complexity,tsitsiklis1985complexity}, of the optimal control problem and can lead to computational implications; see \citet{Papadimitriou:1985aa}. 
The latter depends on whether the system has a \textit{strictly classical} information structure or a  \textit{nonclassical} information structure. In classical information structures, all subsystems receive the same information  and have perfect recall; see \citet{Malikopoulos2016c}. If there is only one subsystem, then such information structures are called \textit{strictly classical} resulting in typical centralized stochastic control problems; see \citet{Kumar1986, Kushner1971}. In partially nested information structures, there are some subsystems who have a nonempty intersection of their information structures while they have perfect recall. Any information structure that is not classical, or partially nested, is called nonclassical.

In most CPS applications with nonclassical information structures there is  a large volume of data of a dynamic nature which is added to the system gradually in real time and  not altogether in advance.
As the volume of data  increases, the domain of the control strategies also increases, and thus it becomes challenging to search for an optimal strategy. Even if an optimal strategy is found, implementing such strategies with increasing domains is burdensome.
In such applications, we typically assume an ideal model of the system to derive optimal control strategies. Such  model-based control approaches cannot effectively facilitate optimal solutions with performance guarantees due to the discrepancy between the model and the actual CPS. On the other hand, traditional supervised learning approaches cannot always  facilitate robust solutions using data derived offline. By contrast, applying reinforcement learning approaches directly to the actual CPS might impose significant implications on safety and robust operation of the system. 

The  goal of this paper is to provide a theoretical framework that  aims at separating the control and learning tasks which allows us to combine offline model-based control with online learning approaches, and thus circumvent the challenges in deriving optimal strategies for CPS with nonclassical information structures. The framework can fit well in applications related to digital twins where a virtual representation of a real-world physical system  serves as the indistinguishable digital counterpart of it.

\subsection{Related Work} \label{sec:1b}
\subsubsection{Model-Based Control}
Most CPS represent systems of subsystems with nonclassical information structures imposing the following technical challenges \citep{Papadimitriou:1987:CMD:2875343.2875347}:
(a) the functional optimization problem of selecting the optimal strategy is not trivial as the class of strategies is infinite dimensional, and (b) the data increase with time causing significant implications on storage requirements and real-time implementation. These difficulties can be addressed  by finding sufficient statistics to compress the growing data without loss of optimality \citep{Striebel1965} using a conditional probability of the state of the system at time $t$ given all the data available up until time $t$. This conditional probability is called \textit{information state}, and it takes values in a time-invariant space. This information state can help us derive results for optimal control strategies in a time-invariant domain; \citet{Krishnamurthy2016}. 

One key property of such information states is that they do not depend on the control strategy of the system, and thus they are related to the concept of separation between estimation and control. An important consequence of this separation is that for any given choice of control strategies and a realization of the system's variables until time $t$, the information states at future times do not depend on the choice of the control strategy at time $t$ but only on the realization of the decision at time $t$; see \citet{Malikopoulos2021}. Thus, the future information states are separated from the choice of the current control strategy. The latter is necessary in order to formulate a classical dynamic program \citep{Howard,Bertsekas2017}, where at each step the optimization problem is to find the optimal decision for a given realization of the information state. 

Several optimality results using information states defined in time-invariant spaces have been reported in the literature for systems with nonclassical information structures; see \citet{Witsenhausen1971a,Varaiya:1978aa, Kurtaran:1979aa, Nayyar2011,wu2014theory,Gupta:2015aa, Dave2019a,Dave2020a}. There are three main approaches to address optimal control problems with a nonclassical information structure: (1) the \textit{person-by-person} approach, (2) the \textit{designer's} approach, and (3) the \textit{common information} approach.  
The person-by-person approach \citep{McGuire1972} aims to convert the problem into a centralized stochastic control problem from the point of view of each subsystem. Namely, we arbitrarily fix the strategies for all subsystems except for one, say subsystem $k \in \mathcal{K}$, $\mathcal{K}=\{1,\ldots,K\}$, $K\in\mathbb{N}$, and then, we derive the optimal strategy  for $k$ given the strategies for all other subsystems. We repeat this process for all subsystems until no subsystem can improve the performance of the system by unilaterally changing their strategy.  
The designer's approach was first introduced by  \citet{Witsenhausen1973}, as a standard form for sequential stochastic control with a nonclassical information structure, and extended later by \citet{mahajan2008sequential}. The designer's approach transforms the problem into a centralized, open-loop planning problem where the objective is to derive the optimal control strategy of the system before the system starts evolving. Thus, no data are observed by the designer, and thus this approach leads to a dynamic programming decomposition over a space of functions instead of decisions  imposing significant computational implications; see  \citet{Papadimitriou:1987:CMD:2875343.2875347}. 
Finally, in the common information approach \citep{Nayyar2011,Nayyar2013b}, the subsystems share a subset of their past observations and decisions to a shared memory accessible by all subsystems.  The solution is derived by reformulating the problem from the viewpoint of a ``coordinator" with access only to the shared information (the common information), whose task is to provide ``prescription" strategies to each subsystem. The coordinator's problem is a centralized stochastic control problem.

\subsubsection{Learning-based Control} 

Adaptive control methods \citep{Narendra1989StableAS,Sastry1989AdaptiveCS,strm1989AdaptiveC,Ioannou2012RobustAC} have successfully addressed regulation and tracking control problems with safety guarantees 
by accommodating model uncertainties; see \citet{Dydek2013AdaptiveCO,Leman2009L1AC}.
Reinforcement learning (RL) has  emerged from machine learning as an adaptive approach to control dynamical systems; \citet{Bertsekas1996,Sutton1998a}.
Several efforts have focused on safe learning approaches combining robust reachability guarantees from control theory with Bayesian analysis based on empirical observations \citep{Fisac:2019wb}, and on  learning the system's unknown dynamics based on a Gaussian process model to iteratively approximate the maximal safe set; see \citet{Akametalu:2014th}. 
Iterative learning control \citep{Armstrong:2021vw}, has been also widely used for system identification, or in conjunction with extremum seeking \citep{Khong:2016ug,Khong:2016ws}, for recursively constructing an input such that the corresponding system output tracks a prescribed reference trajectory closely. In communication networks, where models of wireless channels  are available only through data samples \citep{Gatsis:2021tt} there have been efforts on learning approximately optimal power allocation policies to maximize control performance of a set of independent control systems within a fixed budget; see \citet{Eisen:2018ws}.

Other research efforts over the years have focused on developing robust learning-based approaches in applications related to quadrotor safety and steady-state stability \citep{Aswani:2013ue}, learning-based model predictive control \citep{Rosolia:2018wv},  real-time learning  \citep{Malikopoulos2009b} of powertrain operation of vehicles with respect to the driver's driving style \citep{Malikopoulos2010a}, learning for traffic control in simulation \citep{Wu:2017uz} in conjunction with transfer of learned policies from simulation to a scaled environment \citep{chalaki2020ICCA}, decentralized learning for stochastic games \citep{Arslan:2017vo}, learning for optimal social routing  \citep{Krichene:2018we} and congestion games \citep{Krichene:2015vx}, and  learning for enhanced security against replay attacks in CPS; see \citet{Zhai:2021wy,Sahoo:2020tx}. 

Regularities of optimal control on the space of transition kernels along with the implications on robustness of optimal control strategies derived using an ``incorrect" model and applied to the actual system have been discussed by \citet{Kara:2018vu}. Approximate planning and learning in partially observed systems using an information state was more recently proposed by \citet{Subramanian2020ApproximateIS}. Alternatively, one can  establish  an approximate information state, defined in terms of properties that can be estimated using sampled trajectories, along with an approximate dynamic program; see \citet{Subramanian2019ApproximateIS}. This approach provides a constructive way for RL in partially observed systems. Other efforts have also combined model reference adaptive control with RL to generate online policies; see \citet{Guha2021OnlinePF}. 
Two recent survey papers by \citet{Kiumarsi:2018tq} and \citet{Recht2018ATO} provide a comprehensive review of the general RL problem formulations along with a complete list of applications.

\subsection{Contributions of This Paper} \label{sec:1c}
In this paper, we consider CPS consisting of several subsystems with a common objective and a nonclassical information structure, where the state of the system is not fully observed. 
We provide a theoretical framework,  which can combine offline model-based control with online learning approaches, to yield the optimal control strategy of the system. 
More specifically, we identify a sufficient information state for the system which does not depend on the control strategy. An important consequence of this independence is that for any given choice of a control strategy and a realization of the system’s variables until time $t,$ the information states at future times do not depend on the choice of the control strategy at time $t$ but only on the realization of the decision at time $t,$ and thus they are related to the concept of separation between estimation of the state and control. Namely, the future information states are separated from the choice of the current control strategy. The adjective ``separated" is used to emphasize the fact that in implementing such an optimal policy, we first need to learn the information state and then choose the control. Such control strategies are called separated control strategies. Hence, we can derive offline the optimal control strategy of the system with respect to the information state, which might not be precisely known due to model uncertainties or complexity of the system, and then use standard learning approaches  to learn the information state online while data are added gradually to the system in real time. 

The contributions of this paper are: (1) the institution of  an information state of the system, which does not depend on the control strategy (Theorem \ref{theo:y_t}), that allows us to restrict attention to separated control strategies; (2) a dynamic programming decomposition that uses a CPS model and the information state to derive offline  optimal  separated control strategies (Theorem \ref{theo:dp}) which are optimal  for the actual system (Theorem \ref{theo:CPSmodel}); and (3)  providing structural properties of the dynamic programming decomposition (Theorem \ref{theo:concave}) which allow us to derive the optimal strategies offline using standard techniques for centralized partially observed Markov decision processes.

The two  features which sharply distinguish the framework presented here from previous learning-based, or combined learning and control approaches reported in the literature to date are the following.
First, the CPS imposes a nonclassical information structure while the state of the system is not fully observed. To the best of our knowledge, this is the first time that results on such systems are derived by separating the control and the learning tasks of the problem. Second, the large volume of data that is added to the system gradually is compressed to sufficient statistics without loss of optimality (Theorem \ref{theo:dp}) which constitutes the information state of the system. Using this information state, we derive results for optimal control strategies in a time-invariant domain. Thus, the volume of  data which is added gradually to the system does not cause the domain of the control strategies to increase with time. 
The latter is quite important since searching and then implementing control strategies with increasing domain is burdensome.

\subsection{Organization of This Paper} \label{sec:1e}

The remainder of the paper proceeds as follows. In Section II, we provide the modeling framework and the formulation of the optimal control problem for a CPS with nonclassical information structure. In Section III, we present the analysis for deriving separated control strategies. In Section IV, we present a simple example to illustrate the proposed framework. Finally, we provide concluding remarks and discuss potential directions for future research in Section V.

\section{Problem Formulation} 
\label{sec:2}

\subsection{Notation}
Subscripts denote time, and superscripts index subsystems. We denote random variables with upper case letters, and their realizations with lower case letters, e.g., for a random variable $X_t$, $x_t$ denotes its realization. The shorthand notation $X_{t}^{1:K}$ denotes the vector of random variables $\big(X_{t}^1, X_{t}^2,\ldots,X_{t}^K\big)$, $x_{t}^{1:K}$ denotes the vector of their realization $\big(x_{t}^1, x_{t}^2,\ldots,x_{t}^K\big)$, and $h^{1:K}_t(\cdot,\cdot)$ denotes the vector of functions $\big(h^1_t(\cdot,\cdot),\ldots, h^K_t(\cdot,\cdot)\big)$. The expectation of a random variable is denoted by $\mathbb{E}[\cdot]$, the probability of an event is denoted by $\mathbb{P}(\cdot)$, and the probability density function is denoted by $p(\cdot)$. 
For a control strategy $\bf{g}$, we use $\mathbb{E}^{\bf{g}}[\cdot]$, $\mathbb{P}^{\bf{g}}(\cdot)$, and $p^{\bf{g}}(\cdot)$ to denote that the expectation, probability, and probability density  function, respectively, depend on the choice of the control strategy $\bf{g}$. For two measurable spaces $(\mathcal{X}, \mathscr{X})$ and $(\mathcal{Y}, \mathscr{Y})$, $\mathscr{X}\otimes\mathscr{Y}$ is the product $\sigma$-algebra on $\mathcal{X}\times \mathcal{Y}$ generated  by the collection of all measurable rectangles, i.e., $\mathscr{X}\otimes\mathscr{Y}\coloneqq \sigma(\{A\times B: A\in\mathscr{X}, B\in\mathscr{Y} \})$. The product of  $(\mathcal{X}, \mathscr{X})$ and $(\mathcal{Y}, \mathscr{Y})$ is the measurable space $(\mathcal{X}\times \mathcal{Y}, \mathscr{X}\otimes\mathscr{Y})$. We denote the Cartesian product of the sets $\mathcal{G}^k$, $k \in \mathcal{K}$, $\mathcal{K}=\{1,\ldots,K\}$, $K\in\mathbb{N}$, with $\times_{k\in\mathcal{K}}\mathcal{G}^k$.

\subsection{Proposed Approach}

We  consider a CPS representing a system of subsystems with an informationally decentralized structure in which there is  a large volume of data of a dynamic nature that is added to the system gradually and not altogether in advance. 
For such systems, using model-based control approaches cannot effectively facilitate optimal solutions with performance guarantees due to the discrepancy between the model and the actual CPS. On the other hand, since there is  a large volume of data of a dynamic nature that is added to the system gradually in real time, traditional supervised learning approaches might not facilitate robust solutions using data derived offline. By contrast, applying reinforcement learning approaches directly to the actual CPS might impose significant implications on safety and robust operation of the system. 

\begin{figure}
	\centering
	\includegraphics[width=.9\linewidth, keepaspectratio]{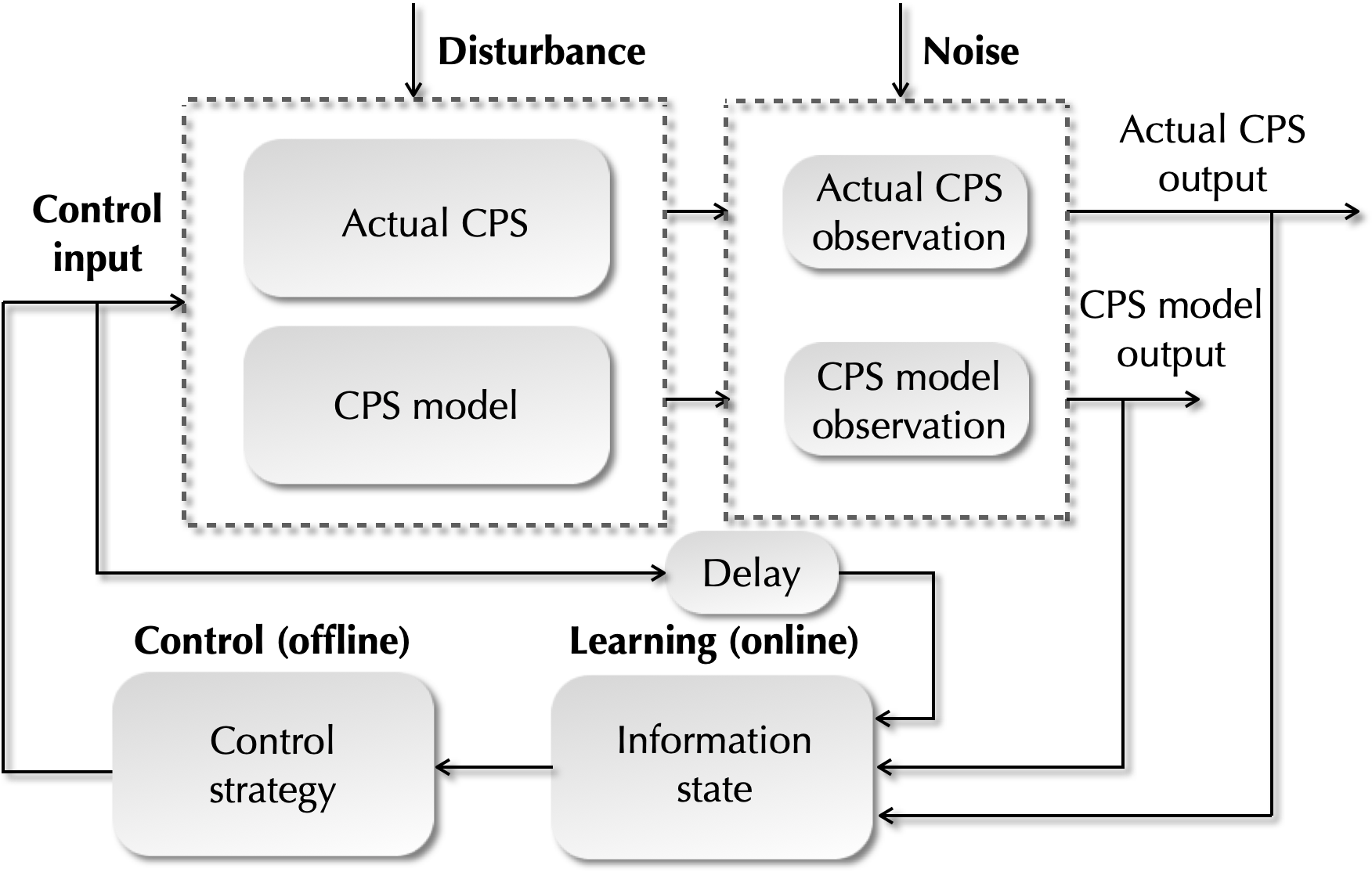} 
	\caption{Illustration of the proposed framework.}%
	\label{fig:1}%
\end{figure}

To address these challenges, our framework aims at separating the control and learning tasks which eventually allows us to combine offline model-based control with online learning approaches.  In particular, we aim at identifying a sufficient information state for the CPS that takes values in a time-invariant space, and  use this information state to derive separated control strategies. Separated control strategies are related to the concept of separation between the estimation of the information  state and control of the system. An important consequence of this separation is that for any given choice of control strategies and a realization of the system's variables until time $t$, the information states of the system at future times do not depend on the choice of the control strategy at time $t$ but only on the realization of the control at time $t$; see \cite{Kumar1986}. Thus, the future information states are separated from the choice of the current control strategy. By establishing separated control strategies, we can derive offline the optimal control strategy of the system with respect to the information state, which might not be precisely known due to model uncertainties or complexity of the system, and then use learning methods  to learn the information state online while data are added gradually to the system in real time.

More specifically, in the proposed framework illustrated in Fig. \ref{fig:1}, we use the actual CPS, i.e., the actual system that we seek to optimally control online, in parallel with a  model of the CPS that is available. The main idea here is the institution of an information state which is the conditional joint probability distribution of the states of the CPS model and the actual CPS at time $t$ given all data available of the model up until time $t$, i.e., $p(\text{state of CPS model, state of actual CPS}~ |$ $~ \text{data of the CPS model})$. 
We use this information state along with the CPS model to derive offline separated control strategies.
Since we derive the optimal strategies offline, the state of the actual CPS  is not known, i.e., the actual CPS operates only online, and thus the optimal strategy of the CPS model is parameterized with respect to all realizations of the  state of the actual CPS. However,  the control strategy and the process of estimating the information state are separated. Therefore, we can learn the information state of the system online, while we operate simultaneously the CPS model and the actual CPS in real time. Namely,  the optimal strategy derived for the CPS model offline, which is parameterized with respect to the state of the actual CPS, is used to operate the actual CPS in parallel with the CPS model. As we collect data from the two systems, we can learn the information state online.
In our exposition, we show that when the information state becomes known online through learning, the separated control strategy of the CPS model derived offline is optimal for the actual CPS (Theorem \ref{theo:CPSmodel}). 
The framework described above is centralized, e.g., a central controller controls all subsystems.

\subsection{Modeling Framework}

We  consider a CPS consisting of $K \in \mathbb{N}$ subsystems with a measurable state space $(\mathcal{X}_t, \mathscr{X}_t)$, where $\mathcal{X}_t$ is the set in which the CPS state takes values at time $t = 0,1,\ldots,T$, $T\in\mathbb{N}$, and $\mathscr{X}_t$ is the associated $\sigma$-algebra. 
Let $X_t$ be a random variable that represents the state of the CPS model and $\hat{X}_t$ be a random variable that represents the state of the actual CPS. Both  random variables are defined on the probability space $(\Omega, \mathscr{F}, \mathbb{P})$, i.e., $X_t: (\Omega, \mathscr{F})\to(\mathcal{X}_t, \mathscr{X}_t)$,  $\hat{X}_t: (\Omega, \mathscr{F})\to(\mathcal{X}_t, \mathscr{X}_t)$, where $\Omega$ is the sample space, $\mathscr{F}$ is the associated $\sigma$-algebra, and $\mathbb{P}$ is a probability measure  on $(\Omega, \mathscr{F})$. The control of each subsystem $k \in \mathcal{K}$, $\mathcal{K}=\{1,\ldots,K\}$, is represented by a random variable $U_t^k: (\Omega, \mathscr{F})\to(\mathcal{U}_t^k, \mathscr{U}_t^k),$ defined on the probability space $(\Omega, \mathscr{F}, \mathbb{P})$, and takes values in the measurable space $(\mathcal{U}^k_t, \mathscr{U}^k_t)$,  where $\mathcal{U}^k_t$ is  subsystem $k$'s nonempty feasible set of actions at time $t$ and $\mathscr{U}^k_t$ is the associated $\sigma$-algebra.
Let ${U}_t^{1:K}=(U_t^1,\ldots,U_t^K)$ be the control of CPS at time $t$.  Starting at the initial state $X_0$, the evolution of the CPS model is described by the state equation
\begin{align}\label{eq:state}
	X_{t+1}=f_t\big(X_t,U_t^{1:K},W_t\big), 
\end{align}
where  $t = 0,1,\ldots,T-1$, and $W_t$ is a random variable defined on the probability space $(\Omega, \mathscr{F}, \mathbb{P})$ that corresponds to the external, uncontrollable disturbance to the CPS and takes values in a measurable set $(\mathcal{W}, \mathscr{W})$, i.e., $W_t:(\Omega, \mathscr{F})\to(\mathcal{W}, \mathscr{W})$. 
Similarly, starting at the initial state $\hat{X}_0$, the evolution of the actual CPS  is described by the state equation
\begin{align}\label{eq:statereal}
	\hat{X}_{t+1}=\hat{f}_t\big(\hat{X}_t,U_t^{1:K},W_t\big),
\end{align}
where $t = 0,1,\ldots,T-1$, while $\{W_t: t=0,\ldots,T-1\}$ is a sequence of independent random variables that are also independent of the initial states $X_0$ and $\hat{X}_0$.
At time $t = 0,1,\ldots,T-1$, every subsystem $k\in\mathcal{K}$ in the model makes an observation $Y_t^k$, which takes values in a measurable set $(\mathcal{Y}^k, \mathscr{Y}^k)$, described by the observation equation 
\begin{align}\label{eq:observe}
	Y_t^k=h_t^k(X_t,Z_t^k),
\end{align}
where $Z_t^k$ is a random variable defined on the probability space $(\Omega, \mathscr{F}, \mathbb{P})$ that corresponds to the noise of each subsystem's sensor and takes values in a measurable set $(\mathcal{Z}^k, \mathscr{Z}^k)$, i.e., $Z_t^k:(\Omega, \mathscr{F})\to(\mathcal{Z}^k, \mathscr{Z}^k)$, while  $\{Z_t^k: ~t=0,\ldots,T-1;~ k=1,\ldots,K\}$ is a sequence of independent random variables that are also independent of $\{W_t: t=0,\ldots,T-1\}$, and the initial states $X_0$ and $\hat{X}_0$. Similarly, at time $t = 0,1,\ldots,T-1$, every subsystem $k\in\mathcal{K}$ in the actual CPS makes an observation $\hat{Y}_t^k$, which takes values in a measurable set $(\mathcal{Y}^k, \mathscr{Y}^k)$, described by the observation equation 
\begin{align}\label{eq:observereal}
	\hat{Y}_t^k=h_t^k(\hat{X}_t,Z_t^k).
\end{align}
We consider that the actual CPS has $n$-step delayed information sharing, i.e., at time $t$, subsystem $k\in\mathcal{K}$ observes  $\hat{Y}_t^k$, and the $n$-step past observations $\hat{Y}_{0:t-n}^{1:K}$ and decisions $U_{0:t-n}^{1:K}$ of the entire system. 
At time $t$, the data available to subsystem $k$ consist of the data $\hat{\Delta}_t$ available to all subsystems, i.e.,
\begin{align}\label{eq:delta}
	\hat{\Delta}_t\coloneqq (\hat{Y}_{0:t-n}^{1:K}, U_{0:t-n}^{1:K}),
\end{align}
where $\hat{Y}_{0:t-n}^{1:K}=\{\hat{Y}_{0:t-n}^{1},\ldots,\hat{Y}_{0:t-n}^{K}\}$, $U_{0:t-n}^{1:K}=\{U_{0:t-n}^{1},$ $\ldots,U_{0:t-n}^{K}\}$, and  the data $\Lambda_t^k$ known only to subsystem $k\in\mathcal{K},$ is given by
\begin{align}\label{eq:lambda}
	\hat{\Lambda}_t^k\coloneqq (\hat{Y}_{t-n+1:t}^{k}, U_{t-n+1:t-1}^{k}).
\end{align}
Note that the $n$-step delayed information sharing can  also be asymmetric, i.e., for each member $k\in\mathcal{K}$, $\hat{Y}_{t-n_k}^{k}$, $U_{t-n_k}^{k},$ where $n_k\in\mathbb{R}$ is constant but not necessarily the same for each  $k$.
The collection $\{(	\hat{\Delta}_t, 	\hat{\Lambda}_t^k);$ $k\in\mathcal{K};~ t=0,\ldots,T-1\}$, is the information structure of the actual CPS and captures which subsystem knows what about the status of the CPS and when.  In what follows, the results hold for any special case of potential information structures that can be:
\begin{enumerate}
\item \textbf{Periodic information sharing with period $\omega\ge1$:}
In this case \cite{Ooi:1997aa}, for $\alpha= 1,2, \ldots$ and $\alpha\omega< t \le (\alpha +1)\omega$, the pair of
$\hat{\Delta}_t,$ 	$\hat{\Lambda}_t^k$, $k\in\mathcal{K},$ becomes
\begin{align}\label{eq:info1}
	&\hat{\Delta}_t\colon= (\hat{Y}_{0:\alpha\omega}^{1:K}, U_{0:\alpha\omega}^{1:K}),\\
	&\hat{\Lambda}_t^k\colon= (\hat{Y}_{\alpha\omega+1:(\alpha +1)\omega}^{k}, U_{\alpha\omega+1:(\alpha +1)\omega}^{k}).
\end{align}
\item \textbf{$n$-step delayed observation sharing:}
In this case \cite{aicardi1987decentralized}, $\hat{\Delta}_t$ and $\hat{\Lambda}_t^k$, $k\in\mathcal{K},$ become
\begin{align}\label{eq:info2}
	&\hat{\Delta}_t\colon= (\hat{Y}_{0:t-n}^{1:K}),\\
	&\hat{\Lambda}_t^k\colon= (\hat{Y}_{t-n+1:t}^{k}, U_{0:t-1}^{k}).
\end{align}
\item  \textbf{$n$-step delayed control sharing:}
In this case \cite{Bismut:1973aa}, $\hat{\Delta}_t$ and $\hat{\Lambda}_t^k$, $k\in\mathcal{K},$ become
\begin{align}\label{eq:info3}
	&\hat{\Delta}_t\colon= (U_{0:t-n}^{1:K}),\\
	&\hat{\Lambda}_t^k\colon= (\hat{Y}_{0:t}^{k}, U_{t-n+1:t-1}^{k}).
\end{align}
\item  \textbf{No sharing information:}
In this case, $\hat{\Delta}_t$ and $\hat{\Lambda}_t^k$, $k\in\mathcal{K},$ become
\begin{align}\label{eq:info4}
	&\hat{\Delta}_t\colon= \emptyset,\\
	&\hat{\Lambda}_t^k\colon= (\hat{Y}_{0:t}^{k}, U_{0:t-1}^{k}).
\end{align}
\end{enumerate}
The CPS model imposes the same information structure as the actual CPS. The collection $\{(\Delta_t, \Lambda_t^k);$  $k\in\mathcal{K};~ t=0,\ldots,T-1\}$, is the information structure of the model.

\subsection{Optimal Control Problem}
Let $(\mathcal{D}_t, \mathscr{D}_t)$  be the measurable spaces of all possible realizations of $\Delta_t$ and $\hat{\Delta}_t$, and $(\mathcal{L}_t^k, \mathscr{L}_t^k), k\in\mathcal{K},$  be the measurable spaces of all possible realizations of $\Lambda_t^k$ and $\hat{\Lambda}_t^k$, where $\mathscr{D}_t$ and $\mathscr{L}_t^k$ are the associated $\sigma$-algebras. 
A control strategy $\textbf{g}=\{g_t;~ t=0,\ldots,T-1\}$, $\textbf{g}\in\mathcal{G}$,  $\mathcal{G}= : (\mathcal{L}_t^1\times\dots\times\mathcal{L}_t^K\times \mathcal{D}_t, \mathscr{L}_t^1\otimes\dots\otimes\mathscr{L}_t^K\otimes\mathscr{D}_t)$ yields a decision 
\begin{align}\label{eq:control}
	U^{1:K}_t =g_t(\hat{\Delta}_t, \hat{\Lambda}^{1:K}_{t}),
\end{align}
where  the measurable function $g_t$ is the control law.
\begin{problem}[Actual CPS] \label{problem1}
	The problem is to derive the optimal control strategy $\textbf{g}^*\in\mathcal{G}$ that minimizes the expected total cost of the actual CPS, 
	\begin{align}\label{eq:cost}
		\hat{J}(\textbf{g})=\mathbb{E}^{\textbf{g}}\left[\sum_{t=0}^{T-1} c_t(\hat{X}_t, U_t^{1:K})+c_T(\hat{X}_T)\right],
	\end{align}
	where the expectation is with respect to the joint probability distribution of the random variables $\hat{X}_t$ and  $U_t^{1:K}$ designated by the choice of $\textbf{g}\in\mathcal{G}$, $c_t(\cdot, \cdot):(\mathcal{X}_t\times \prod_{k\in\mathcal{K}} \mathcal{U}_t^k, \mathscr{X}_t \otimes \mathscr{U}_t^1\otimes\cdots\otimes\mathscr{U}_t^K)\to\mathbb{R}$ 
	is the measurable cost function of the actual CPS at $t$, and $c_T(\cdot):(\mathcal{X}_T, \mathscr{X}_T) \to\mathbb{R}$ is the measurable cost function at $T$. 
\end{problem}

The statistics of the primitive random variables $\hat{X}_0$,  $\{W_t: t=0,\ldots,T-1\}$, $\{Z_t^k: k\in\mathcal{K};~ t=0,\ldots,T-1\}$, the observation equations $\{h^k_t: k\in\mathcal{K};~ t=0,\ldots,T-1\}$, and the cost functions $\{c_t: t=0,\ldots,T\}$ are all known. However, the state equations $\{\hat{f}_t: t=0,\ldots,T-1\}$ are not known.


\section{Separation of Learning and Control } \label{sec:3}
In our exposition, we address Problem \ref{problem1} from the point of view of a central controller who seeks to derive the optimal strategy $\textbf{g}\in\mathcal{G}$ of the actual CPS.
First, we institute an appropriate information state, defined formally next, that can be used to formulate a classical dynamic programming decomposition. To establish this information state, we use the CPS model in conjunction with the actual CPS (Fig. \ref{fig:2}).

\begin{figure}
	\centering
	\includegraphics[width=.9\linewidth, keepaspectratio]{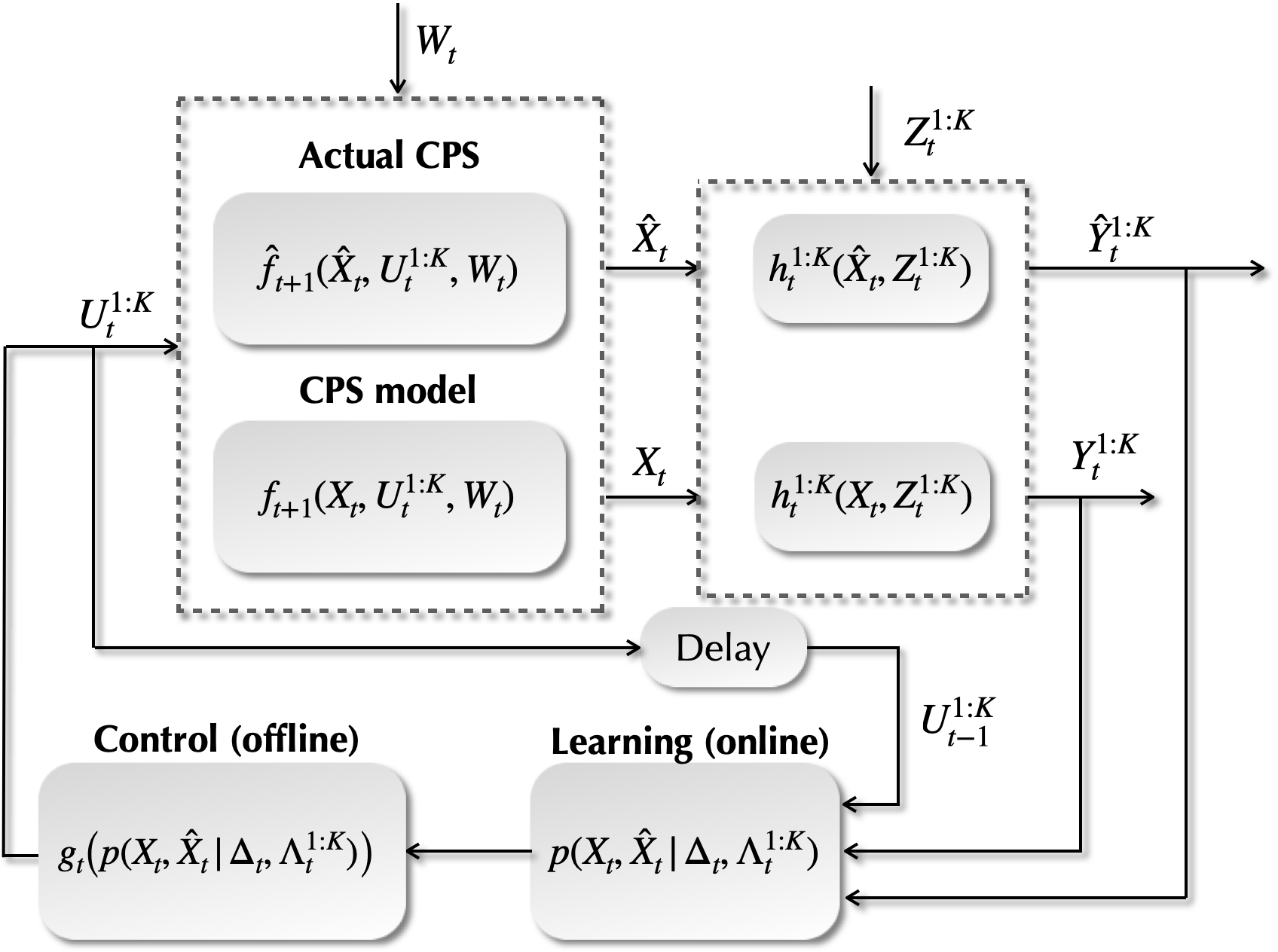} 
	\caption{Separation of learning and control.}%
	\label{fig:2}%
\end{figure}

\begin{definition} \label{def:infoteam}
	An information state, $\Pi_t$, for the system described by the state equations \eqref{eq:state} and \eqref{eq:statereal}, is (a) a function of  $(\Delta_t, \Lambda_t^{1:K})$, while (b) $\Pi_{t+1}$ is determined from $\Pi_t$, $Y_{t+1}^{1:K}$, and $U_{t}^{1:K}$.
\end{definition}

We consider densities for all probability distributions to simplify notation. Let $\textbf{g}=\{g_t;~ t=0,\ldots,T-1\}$, $\textbf{g}\in\mathcal{G},$ be a control strategy and $(\Delta_t, \Lambda_t^{1:K})$ be the information structure of the CPS model. The control strategy $\textbf{g}$ yields a decision $U^{1:K}_t =g_t(\Delta_t, \Lambda^{1:K}_{t})$.

Before we proceed with establishing the information state, we prove some essential properties. 

\begin{lemma} \label{lem:y_t}
	For any control strategy $\textbf{g}\in\mathcal{G}$ of the system, 
	\begin{align}\label{eq:y_t}
		p^{\textbf{g}}(Y^{1:K}_{t+1}~|~X_{t+1}, \hat{X}_{t+1}, \Delta_{t}, \Lambda^{1:K}_t, U^{1:K}_t)= p(Y^{1:K}_{t+1}~|~X_{t+1}),
	\end{align}
	for all $t=0,1,\ldots, T-1.$
\end{lemma}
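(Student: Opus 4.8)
The plan is to reduce \eqref{eq:y_t} to a conditional-independence statement and then settle it via the independence assumptions on the primitive noise variables. First I would invoke the observation equation \eqref{eq:observe}, written at time $t+1$: since $Y^k_{t+1}=h^k_{t+1}(X_{t+1},Z^k_{t+1})$ for every $k\in\mathcal{K}$, the observation vector $Y^{1:K}_{t+1}$ is a fixed measurable function of the pair $\big(X_{t+1},Z^{1:K}_{t+1}\big)$. Consequently, once $X_{t+1}$ is given, the only remaining source of randomness in $Y^{1:K}_{t+1}$ is the sensor-noise vector $Z^{1:K}_{t+1}$. The lemma is therefore equivalent to the claim that $Z^{1:K}_{t+1}$ is conditionally independent of the conditioning tuple $\big(\hat{X}_{t+1},\Delta_t,\Lambda^{1:K}_t,U^{1:K}_t\big)$, and that this independence is unaffected by the offline choice of $\textbf{g}$.

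The key step is to verify that each of these conditioning variables is a measurable function of the primitive random variables indexed by times no later than $t$, namely $X_0$, $\hat{X}_0$, $\{W_s:0\le s\le t\}$, and $\{Z^{1:K}_s:0\le s\le t\}$. I would establish this by unrolling the defining recursions backward: the decision $U^{1:K}_t=g^{1:K}_t(\Lambda^{1:K}_t,\Delta_t)$ depends, by \eqref{eq:control}, only on the data $(\Delta_t,\Lambda^{1:K}_t)$; by \eqref{eq:delta} and \eqref{eq:lambda} these data are built from observations and decisions up to time $t$; each such observation is, by \eqref{eq:observe}, a function of a state $X_s$ and a noise $Z^{1:K}_s$ with $s\le t$; and each state reduces, by \eqref{eq:state} and induction, to the primitives listed above. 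The same unrolling applied to $\hat{X}_{t+1}=\hat{f}_t(\hat{X}_t,U^{1:K}_t,W_t)$ via \eqref{eq:statereal} shows it too is measurable with respect to $\sigma\big(X_0,\hat{X}_0,W_{0:t},Z^{1:K}_{0:t}\big)$. Since $\{Z^k_t\}$ is a sequence of independent random variables that are independent of $\{W_t\}$ and of the initial states, the fresh noise $Z^{1:K}_{t+1}$ is independent of this $\sigma$-algebra, hence of the entire conditioning tuple; crucially, because each control law is a fixed measurable function for the offline-chosen $\textbf{g}$, conditioning on $\textbf{g}$ introduces no additional dependence.

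Finally I would convert this independence into the density identity by a substitution argument. Writing the left-hand side as the law of $h^{1:K}_{t+1}(X_{t+1},Z^{1:K}_{t+1})$ under the conditioning, I would use that, given $X_{t+1}=x_{t+1}$, the distribution of $Z^{1:K}_{t+1}$ equals its unconditional law irrespective of the extra conditioning variables; pushing this law through $h^{1:K}_{t+1}(x_{t+1},\cdot)$ yields exactly $p(Y^{1:K}_{t+1}\mid X_{t+1})$, and the $\textbf{g}$-superscript drops out. I expect the main obstacle to be the bookkeeping in the backward unrolling: one must confirm that nothing carrying time index $t+1$—other than $X_{t+1}$, which is already in the conditioning—enters the tuple $\big(\hat{X}_{t+1},\Delta_t,\Lambda^{1:K}_t,U^{1:K}_t\big)$, so that the independence of $Z^{1:K}_{t+1}$ genuinely applies. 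The delayed-sharing structure helps here, since $\Delta_t$ and $\Lambda^{1:K}_t$ contain no time-$(t+1)$ quantities by \eqref{eq:delta}–\eqref{eq:lambda}.
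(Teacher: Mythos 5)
Your proposal is correct and follows essentially the same route as the paper: reduce the claim to the fact that $Y^{1:K}_{t+1}=h^{1:K}_{t+1}(X_{t+1},Z^{1:K}_{t+1})$ with $Z^{1:K}_{t+1}$ independent of everything in the conditioning set, so the conditional law depends only on $X_{t+1}$ and the superscript $\textbf{g}$ drops. The one difference is that the paper simply asserts the first step (that the extra conditioning variables can be removed), whereas you justify it by unrolling the recursions to show that $\big(\hat{X}_{t+1},\Delta_t,\Lambda^{1:K}_t,U^{1:K}_t\big)$ is measurable with respect to the primitives up to time $t$ --- a worthwhile addition, but not a different argument.
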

\begin{proof}
The realization of  $Y^{1:K}_{t+1}$ is statistically determined by the conditional distribution of $Y^{1:K}_{t+1}$ given $X_{t+1}$ in \eqref{eq:observe}, hence
\begin{align}\label{eq:lem1a}
	p^{\textbf{g}}(Y^{1:K}_{t+1}~|~X_{t+1},\hat{X}_{t+1}, \Delta_{t}, \Lambda^{1:K}_t, U^{1:K}_t)= p^{\textbf{g}}(Y^{1:K}_{t+1}~|~X_{t+1}).
\end{align}		
However,
\begin{align}\label{eq:lem1b}
	p^{\textbf{g}}(Y^{1:K}_{t+1}~|~X_{t+1}) = p^{\textbf{g}}(Z^{1:K}_{t+1}\in \prod_{k\in\mathcal{K}} B^k~|~X_{t+1}),
\end{align}	
where $B^k\in \mathscr{Z}^k$, $k\in\mathcal{K}$. Since, $\{Z_{t}^k:~k=1,\ldots,K;~ t=0,\ldots,T-1\}$ is a sequence of independent random variables that are independent of $X_{t+1}$, 
\begin{align}\label{eq:lem1c}
	p^{\textbf{g}}(Z^{1:K}_{t+1}\in \prod_{k\in\mathcal{K}} B^k~|~X_{t+1}) = p(Z^{1:K}_{t+1}\in \prod_{k\in\mathcal{K}} B^k).
\end{align}
Hence, 
\begin{align}\label{eq:lem1d}
	p^{\textbf{g}}(Y^{1:K}_{t+1}~|~X_{t+1}) = p(Y^{1:K}_{t+1}~|~X_{t+1}).
\end{align} 
The result follows from \eqref{eq:lem1a} and \eqref{eq:lem1d}.
\end{proof}

\begin{lemma} \label{lem:x_t1ut}
	For any control strategy $\textbf{g}\in\mathcal{G}$ of the system, 
	\begin{align}
		&p^{\textbf{g}}(X_{t+1},\hat{X}_{t+1}~|~X_t, \hat{X}_{t},  \Delta_{t}, \Lambda^{1:K}_t, U^{1:K}_t) \nonumber\\
		&= p(X_{t+1}, \hat{X}_{t+1}~|~X_t, \hat{X}_{t}, U^{1:K}_t), \label{eq:x_t1ut}
	\end{align}
	for all $t=0,1,\ldots, T-1.$
\end{lemma}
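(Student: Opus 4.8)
The plan is to follow the same template as the proof of Lemma \ref{lem:y_t}: I would first substitute the state equations so that the future states become deterministic functions of the conditioning variables together with the fresh disturbance $W_t$, and then exploit the independence of $W_t$ from the past to discard the extraneous conditioning on $(\Delta_t, \Lambda_t^{1:K})$. First, I would rewrite the left-hand side using \eqref{eq:state} and \eqref{eq:statereal}: for any measurable rectangle $A \times \hat{A}$, the event $\{X_{t+1}\in A,\ \hat{X}_{t+1}\in \hat{A}\}$ equals $\{f_t(X_t,U_t^{1:K},W_t)\in A,\ \hat{f}_t(\hat{X}_t,U_t^{1:K},W_t)\in \hat{A}\}$. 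Once the realizations of $X_t$, $\hat{X}_t$, and $U_t^{1:K}$ appearing in the conditioning are fixed, the only residual randomness in this event comes from the single disturbance $W_t$, which both state maps share.

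The crux is then the independence step. I would argue that every component of the conditioning tuple $(X_t, \hat{X}_t, \Delta_t, \Lambda_t^{1:K}, U_t^{1:K})$ is measurable with respect to the $\sigma$-algebra generated by the primitive variables $\{X_0, \hat{X}_0, W_{0:t-1}, Z_{0:t}^{1:K}\}$ up to time $t$: the data $\Delta_t$ and $\Lambda_t^{1:K}$ in \eqref{eq:delta}--\eqref{eq:lambda} are built from past observations $Y_s^k = h_s^k(X_s, Z_s^k)$ via \eqref{eq:observe} and past decisions, while the current decision $U_t^{1:K}$ is itself a function of $(\Delta_t, \Lambda_t^{1:K})$ through the control law \eqref{eq:control}. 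By the standing assumptions, $W_t$ is independent of the initial states $X_0, \hat{X}_0$, of the remaining disturbances $\{W_s : s \neq t\}$, and of the noise sequence $\{Z_s^k\}$; hence $W_t$ is independent of the entire $\sigma$-algebra just described, and in particular of $(\Delta_t, \Lambda_t^{1:K})$ after $(X_t, \hat{X}_t, U_t^{1:K})$ have been fixed.

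Consequently, adjoining the past information $(\Delta_t, \Lambda_t^{1:K})$ to the conditioning leaves the conditional law of $W_t$ unchanged, and therefore leaves the conditional law of the deterministic pair $(X_{t+1}, \hat{X}_{t+1})$ unchanged as well; marginalizing over $W_t$ then produces exactly the right-hand side $p(X_{t+1}, \hat{X}_{t+1}~|~X_t, \hat{X}_{t}, U_t^{1:K})$. The main obstacle I anticipate is the measurability bookkeeping needed to certify that each conditioning variable is a function of primitives strictly prior to $W_t$ (so that the independence genuinely applies); once that fact is recorded, the independence of $W_t$ renders the reduction immediate, exactly mirroring how \eqref{eq:lem1c} closes the proof of Lemma \ref{lem:y_t}.
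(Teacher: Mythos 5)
Your proposal is correct and follows essentially the same route as the paper: reduce the transition to an event about the shared disturbance $W_t$ via the state equations \eqref{eq:state}--\eqref{eq:statereal}, then invoke the independence of $W_t$ from the conditioning variables to drop $(\Delta_t,\Lambda_t^{1:K})$ and the strategy superscript. If anything, you are slightly more careful than the paper, which argues through the marginals of $X_{t+1}$ and $\hat{X}_{t+1}$ separately before asserting the joint result, whereas you treat the joint event $\{f_t(X_t,U_t^{1:K},W_t)\in A,\ \hat{f}_t(\hat{X}_t,U_t^{1:K},W_t)\in\hat{A}\}$ directly and make the measurability of the conditioning tuple with respect to the pre-$W_t$ primitives explicit.
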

\begin{proof}
	The realization of  $X_{t+1}$ is statistically determined by the conditional distribution of $X_{t+1}$ given $X_{t}$ and $U^{1:K}_{t}$, i.e., $p^{\textbf{g}}(X_{t+1}~|~X_t,  U^{1:K}_t) $. 
	Similarly, the realization of $\hat{X}_{t+1}$ is statistically determined by the conditional distribution of $\hat{X}_{t+1}$ given $\hat{X}_{t}$ and $U^{1:K}_{t}$, i.e., $p^{\textbf{g}}(\hat{X}_{t+1}~|~\hat{X}_t,  U^{1:K}_t) $. 

	From \eqref{eq:state}, we have
	\begin{gather}
		p^{\textbf{g}}(X_{t+1}~|~X_t,  U^{1:K}_t) = p^{\textbf{g}}(W_{t}\in A~|~X_t,  U^{1:K}_t), \label{eq:lem2a}
	\end{gather}		
	where $A\in\mathscr{W}$. From \eqref{eq:statereal}, we have
	\begin{gather}
		p^{\textbf{g}}(\hat{X}_{t+1}~|~\hat{X}_t,  U^{1:K}_t) = p^{\textbf{g}}(W_{t}\in A~|~\hat{X}_t,  U^{1:K}_t), \label{eq:lem2areal}
	\end{gather}		
	where $A\in\mathscr{W}$. 	
	Since, $\{W_{t}: t=0,\ldots,T-1\}$ is a sequence of independent random variables that are independent of $X_{t}$, $\hat{X}_t$, and $U^{1:K}_{t}$, 
	\begin{align}
		p^{\textbf{g}}(W_{t}\in A~|~X_t,  U^{1:K}_t) &= p^{\textbf{g}}(W_{t}\in A~|~\hat{X}_t,  U^{1:K}_t) \nonumber\\ &= p(W_{t}\in A).\label{eq:lem2b}
	\end{align}
	
	Next,
	\begin{align}
		&p^{\textbf{g}}(X_{t+1}~|~X_t, \Delta_{t}, \Lambda^{1:K}_t, U^{1:K}_t) \nonumber\\
		&= p^{\textbf{g}}(W_{t}\in A~|~X_t, \Delta_{t}, \Lambda^{1:K}_t, U^{1:K}_t) = p(W_{t}\in A). \label{eq:lem2c}
	\end{align}	
	Similarly,
	\begin{align}
		&p^{\textbf{g}}(\hat{X}_{t+1}~|~\hat{X}_t, \Delta_{t}, \Lambda^{1:K}_t, U^{1:K}_t) \nonumber\\
		&= p^{\textbf{g}}(W_{t}\in A~|~\hat{X}_t, \Delta_{t}, \Lambda^{1:K}_t, U^{1:K}_t) = p(W_{t}\in A). \label{eq:lem2creal}
	\end{align}	
	
	The result follows from \eqref{eq:lem2a}, \eqref{eq:lem2areal}, \eqref{eq:lem2b}, \eqref{eq:lem2c}, and \eqref{eq:lem2creal}.	
\end{proof}

\begin{lemma} \label{lem:x_t}
	For any control strategy $\textbf{g}\in\mathcal{G}$ of the system, 
	\begin{align}\label{eq:x_t}
		p^{\textbf{g}}(X_{t}, \hat{X}_{t}~|~\Delta_{t}, \Lambda^{1:K}_t) = p(X_{t},\hat{X}_{t}~|~\Delta_{t}, \Lambda^{1:K}_t),
	\end{align}
	for all $t=0,1,\ldots, T-1.$
\end{lemma}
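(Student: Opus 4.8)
The plan is to argue by induction on $t$, after first observing that the pair $(\Delta_t, \Lambda_t^{1:K})$ carries exactly the full history $(Y_{0:t}^{1:K}, U_{0:t-1}^{1:K})$: the observations $Y_{0:t-n}^{1:K}$ and $Y_{t-n+1:t}^{1:K}$ from $\Delta_t$ and $\Lambda_t^{1:K}$ together give $Y_{0:t}^{1:K}$, and the decisions $U_{0:t-n}^{1:K}$ and $U_{t-n+1:t-1}^{1:K}$ together give $U_{0:t-1}^{1:K}$. Consequently $(\Delta_{t+1}, \Lambda_{t+1}^{1:K})$ is obtained from $(\Delta_t, \Lambda_t^{1:K})$ by adjoining only the new decisions $U_t^{1:K}$ and the new observations $Y_{t+1}^{1:K}$, which is the structure the induction exploits. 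For the base case $t=0$, no control has yet been applied, so the conditioning reduces to $Y_0^{1:K}$; since these observations depend only on $X_0$ and the independent sensor noise, a single application of Bayes' rule against the given prior of $(X_0, \hat{X}_0)$ produces a posterior free of $\textbf{g}$.

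For the inductive step, I would assume \eqref{eq:x_t} at time $t$ and apply Bayes' rule to the augmented conditioning set. Writing the posterior at $t+1$ as proportional (after normalization over $(X_{t+1}, \hat{X}_{t+1})$) to
\begin{align}
&p^{\textbf{g}}(Y^{1:K}_{t+1}~|~X_{t+1}, \hat{X}_{t+1}, \Delta_{t}, \Lambda^{1:K}_t, U^{1:K}_t) \nonumber \\
&\cdot \, p^{\textbf{g}}(X_{t+1}, \hat{X}_{t+1}~|~\Delta_{t}, \Lambda^{1:K}_t, U^{1:K}_t), \nonumber
\end{align}
I would dispatch the two factors separately. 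The first factor is exactly the left-hand side of Lemma~\ref{lem:y_t} and therefore collapses to $p(Y^{1:K}_{t+1}~|~X_{t+1})$, which is strategy-free. For the predictive factor I would marginalize over $(X_t, \hat{X}_t)$ and use that, by Lemma~\ref{lem:x_t1ut}, the transition term equals $p(X_{t+1}, \hat{X}_{t+1}~|~X_t, \hat{X}_t, U^{1:K}_t)$, independent of $\textbf{g}$.

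The crux — and the step I expect to be the main obstacle — is then to show that the residual factor $p^{\textbf{g}}(X_t, \hat{X}_t~|~\Delta_t, \Lambda^{1:K}_t, U^{1:K}_t)$ coincides with $p^{\textbf{g}}(X_t, \hat{X}_t~|~\Delta_t, \Lambda^{1:K}_t)$, so that the inductive hypothesis can be invoked. This holds because each $U_t^k = g_t^k(\Lambda_t^k, \Delta_t)$ is a deterministic, $\sigma(\Delta_t, \Lambda_t^{1:K})$-measurable function of the quantities already being conditioned on, whence $U_t^{1:K}$ contributes no additional information; it is precisely here that the separated structure of the control law is essential, and one must check that the offline-unknown realization of $\hat{X}_t$ never enters $g_t^k$ (it does not, since $g_t^k$ depends only on $(\Lambda_t^k, \Delta_t)$). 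With all three factors strategy-free, and since the normalizing denominator is the marginal of the numerator over $(X_{t+1}, \hat{X}_{t+1})$ and is therefore also strategy-free, the posterior at $t+1$ is independent of $\textbf{g}$, closing the induction. A minor technical caveat is that the Bayes and marginalization manipulations are carried out at the level of densities, which is justified by the standing assumption that all distributions admit densities, provided the relevant normalizing constants are positive on the realizations of interest.
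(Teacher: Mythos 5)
Your proof is correct, but it follows a genuinely more elaborate route than the paper's own argument for this lemma. The paper's proof of Lemma \ref{lem:x_t} is essentially two lines: it rewrites the conditioning set $(\Delta_t,\Lambda_t^{1:K})$ in terms of past observations and decisions and then asserts that the realizations of $(X_t,\hat{X}_t)$ are statistically determined by the transition kernel $p(X_t,\hat{X}_t\,|\,X_{t-1},\hat{X}_{t-1},U_{t-1}^{1:K})$, which is strategy-free by Lemma \ref{lem:x_t1ut}, so the superscript $\textbf{g}$ may be dropped; it never invokes Lemma \ref{lem:y_t} and performs no Bayes update inside this proof. Your induction instead carries out the full filtering recursion: Bayes' rule splits the time-$(t+1)$ posterior into the observation likelihood (collapsed by Lemma \ref{lem:y_t}), the transition kernel (collapsed by Lemma \ref{lem:x_t1ut}), the time-$t$ posterior (induction hypothesis), and a normalizer that is the marginal of a strategy-free numerator; and you justify dropping $U_t^{1:K}$ from the conditioning directly from the fact that $g_t^k$ is a measurable function of $(\Lambda_t^k,\Delta_t)$ alone, rather than by citing Remark \ref{cor:lemU} --- which keeps the argument non-circular, since that remark is stated in the paper as a consequence of this very lemma. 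What your approach buys is rigor: the paper's short proof glosses over precisely the points you address (how conditioning on the observations $Y_{0:t}^{1:K}$ enters the posterior, and the recursive dependence on the strategy-freeness of the time-$(t-1)$ posterior), and the machinery you assemble is essentially the same computation the paper defers to the proof of Theorem \ref{theo:y_t} in Appendix \ref{app:1}. In effect you prove Lemma \ref{lem:x_t} and the recursion of Theorem \ref{theo:y_t} simultaneously, which is a legitimate and arguably cleaner organization of the argument.
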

\begin{proof} 
	By expanding $p^{\textbf{g}}(X_{t},\hat{X}_{t}~|~\Delta_{t}, \Lambda^{1:K}_t)$, we have
	\begin{align}
		&p^{\textbf{g}}(X_{t},\hat{X}_{t}~|~\Delta_{t}, \Lambda^{1:K}_t)\nonumber\\
		&= p^{\textbf{g}}(X_{t},\hat{X}_{t}~|~\Delta_{t-1}, \Lambda^{1:K}_{t-2}, Y^{1:K}_{t-1}, Y^{1:K}_t, U^{1:K}_{t-2},U^{1:K}_{t-1}). \label{eq:lem4a}
	\end{align}		
	However, the realizations of $X_{t}$ and $\hat{X}_{t}$ are statistically determined by the conditional joint distribution of  $X_{t}$ and $\hat{X}_{t}$  given $X_{t-1}$, $\hat{X}_{t-1}$ and $U^{1:K}_{t-1}$, which does not depend on the control strategy $\textbf{g}$ (Lemma \ref{lem:x_t1ut}), so we can drop the superscript in \eqref{eq:lem4a}, and thus \eqref{eq:x_t} follows immediately.	
\end{proof}

\begin{remark}\label{cor:lemU}
	As a consequence of Lemma \ref{lem:x_t}, and since both $X_{t}$ and $\hat{X}_{t}$ do not depend on $U^{1:K}_t$, we have
	\begin{align}\label{eq:lemU}
		p^{\textbf{g}}(X_{t}, \hat{X}_t~|~\Delta_{t}, \Lambda^{1:K}_t, U^{1:K}_t) = p(X_{t}, \hat{X}_t~|~\Delta_{t}, \Lambda^{1:K}_t).
	\end{align}
\end{remark}
Given that we can observe the data $(\Delta_t, \Lambda_t^{1:K})$ of the CPS model, we can compress these data to a sufficient statistic which is the probability density function $p(X_{t}, \hat{X}_t ~|~\Delta_{t}, \Lambda^{1:K}_{t})$, called information state and denoted by $\Pi_{t}(\Delta_{t}, \Lambda^{1:K}_{t})(X_{t},\hat{X}_t)$.
The next result shows that such information state does not depend on the control strategy of the CPS model.

\begin{theorem}[Information State of the System] \label{theo:y_t}
	For any control strategy $\textbf{g}\in\mathcal{G}$ derived offline for the CPS model, the information state $\Pi_{t}(\Delta_{t}, \Lambda^{1:K}_{t})(X_{t},\hat{X}_t)$ does not depend on the control strategy $\textbf{g}$.
	Moreover, there is a function $\phi_t$, which does not depend on the control strategy $\textbf{g}$, such that
	\begin{align}\label{eq:xt1}
		\Pi_{t+1}(\Delta_{t+1}, \Lambda^{1:K}_{t+1})&(X_{t+1},\hat{X}_{t+1}) \nonumber\\
		= \phi_t\big[ \Pi^k_{t}(\Delta_{t}, \Lambda^{1:K}_{t})&(X_{t},\hat{X}_t), Y^{1:K}_{t+1}, U^{1:K}_t \big],
	\end{align}
	for all $t=0,1,\ldots, T-1.$
\end{theorem}
\begin{proof}
	See Appendix \ref{app:1}.
\end{proof}

The information state $\Pi_{t+1}(\Delta_{t+1}, \Lambda^{1:K}_{t+1})(X_{t+1},$ $ \hat{X}_{t+1})$ of the system is the entire probability density function and not just its value at any particular realization of  $X_{t+1}$ and $ \hat{X}_{t+1}$. This is because to compute $\Pi_{t+1}(\Delta_{t+1}, \Lambda^{1:K}_{t+1})(X_{t+1},\hat{X}_{t+1})$ for any particular realization of  $X_{t+1}$ and $\hat{X}_{t+1}$, we need the probability density functions $p(~\cdot, \cdot  ~|~ \Delta_{t}, \Lambda^{1:K}_t, U^{1:K}_t)$ and $p(~\cdot, \cdot  ~|~ \Delta_{t}, \Lambda^{1:K}_t)$. This implies that the information state takes values in the space of these probability densities, which is an infinite-dimensional space.

In what follows, to simplify notation, the information state $\Pi_{t}(\Delta_{t}, \Lambda^{1:K}_{t})(X_{t},\hat{X}_{t})$ of the system at $t$ is denoted simply by $\Pi_t$. We use its arguments only if it is required in our exposition.


\begin{definition}\label{def:septeam}
	A control strategy $\textbf{g}=\{g_t;~ t=0,\ldots,T-1\}$, of the system is said to be \textit{separated} if $g_t$ depends on $\Delta_{t}$ and $\Lambda^{1:K}_{t}$ only through the information state, i.e., $U^{1:K}_t  = g_t\big(\Pi_{t}(\Delta_{t}, \Lambda^{1:K}_{t})(X_{t},\hat{X}_{t})\big)$. Let $\mathcal{G}^s\subseteq\mathcal{G}$ denote the set of all separated control strategies.
\end{definition}

To derive the optimal control strategy of the actual CPS in Problem \ref{problem1}, we formulate the following optimization problem.

\begin{problem}(CPS model) \label{problem2}
	Using the CPS model, we seek to derive offline the optimal control strategy $\textbf{g}^*\in\mathcal{G}^s$ that minimizes the following expected total cost 
	\begin{align}			
		&J(\textbf{g};\hat{x}_{0:T})\nonumber\\
		&= \mathbb{E}^{\textbf{g}}\Bigg[\sum_{t=0}^{T-1}\Big[c_t(X_t, U^{1:K}_t)
		+ \beta \cdot|X_{t+1}- \hat{X}_{t+1}|^2\Big]\nonumber\\
		&+c_T(X_T) \Bigg], \label{eq:costreal}	
	\end{align}	
	where $X_{t+1}=f_t\big(X_t,U_t^{1:K},W_t\big)$, $\hat{X}_{t+1}=\hat{f}_t\big(\hat{X}_t,$ $U_t^{1:K},W_t\big)$, and $\beta$ is a factor to adjust the units and size of the norm accordingly as designated by the cost function $c_t(\cdot, \cdot)$. The norm penalizes any discrepancy between the realizations of the state of the CPS model and the state of the actual CPS. The expectation in \eqref{eq:costreal}	is with respect to the joint probability distribution of the random variables $X_t$,  $U_t^{1:K}$, $\hat{X}_t$, $t=0,1,\ldots, T,$ (designated by the choice of $\textbf{g}\in\mathcal{G}^s$) and $W_t$.
	Since solving \eqref{eq:costreal}	is an offline process, the realizations $\hat{x}_{0:T}$ of the state $\hat{X}_{t}$, $t = 0,\ldots, T,$ of the actual CPS  are not known, and thus $\textbf{g}^*$ is parameterized with respect to $\hat{x}_{0:T}$. The statistics of the primitive random variables $X_0$,  $\{W_t: t=0,\ldots,T-1\}$, $\{Z_t^k: k\in\mathcal{K};~ t=0,\ldots,T-1\}$, the state equations $\{f_t: t=0,\ldots,T-1\}$, the observation equations $\{h^k_t: k\in\mathcal{K};~ t=0,\ldots,T-1\}$, and the cost functions $\{c_t: t=0,\ldots,T\}$ are all known. 
\end{problem}

Next, we use the information state $\Pi_{t}(\Delta_{t}, \Lambda^{1:K}_{t})$ $(X_{t},\hat{X}_{t})$ to  derive offline the optimal separated control strategy in Problem \ref{problem2}. In our exposition, we define recursive functions, and show that a separated control strategy of the CPS model is optimal. In addition, we obtain a classical dynamic programming decomposition. 

\begin{theorem} \label{theo:dp}
	Let $V_t\big(\Pi_{t}(\Delta_{t}, \Lambda^{1:K}_{t})(X_{t},\hat{X}_{t})\big)$ be functions defined recursively for all $\textbf{g}\in\mathcal{G}^s$ by
	\begin{align}
		&V_T\big(\Pi_{T}(\Delta_{T}, \Lambda^{1:K}_{T})(X_{T},\hat{X}_{T})\big) \coloneqq \mathbb{E}^{\textbf{g}}\Big[c_T(X_T)~|\nonumber\\
		&\Pi_{T}=\pi_T \Big],\nonumber\\
		&V_t\big(\Pi_{t}(\Delta_{t}, \Lambda^{1:K}_{t})(X_{t},\hat{X}_{t})\big)\coloneqq \inf_{u^{1:K}_t\in\prod_{k\in\mathcal{K}} \mathcal{U}_t^k }\mathbb{E}^{\textbf{g}}\Big[c_t(X_t,\nonumber\\
		&U^{1:K}_t) 
		+ \beta ~|X_{t+1}  - \hat{X}_{t+1}|^2\nonumber\\
		&+V_{t+1}\big(\phi_t\big[ \Pi_{t}(\Delta_{t}, \Lambda^{1:K}_{t})(X_{t},\hat{X}_{t}), Y^{1:K}_{t+1}, U^{1:K}_t\big]\big)~|~\Pi_{t}=\pi_t, \nonumber\\ 
		&		U^{1:K}_t=u^{1:K}_t  \Big], \label{theo2:1b}		
	\end{align}
	where $c_T(X_T)$ is the cost function at $T$;  $\beta$ is a factor to adjust the units and size of the norm as designated by the cost function $c_t(\cdot, \cdot)$; and $\pi_T$, $\pi_t$, $u^{1:K}_t$ are the realizations of $\Pi_{T}$, $\Pi_{t}$, and $U^{1:K}_t$, respectively.
	Then, (a) for any control strategy $\textbf{g}\in\mathcal{G}^s$,
	\begin{align}			
		&V_t\big(\Pi_{t}(\Delta_{t}, \Lambda^{1:K}_{t})(X_{t},\hat{X}_{t})\big)\le J_t(\textbf{g};\hat{x}_{t:T})\nonumber\\ &\coloneqq \mathbb{E}^{\textbf{g}}\Bigg[\sum_{l=t}^{T-1}\Big[c_l(X_l,U^{1:K}_l)+ \beta \cdot|X_{l+1} - \hat{X}_{l+1}|^2\Big]\nonumber\\					
		&+ c_T(X_T) ~|~\Delta_{t}, \Lambda^{1:K}_{t}  \Bigg],
		\label{theo2:1c}	
	\end{align}	
	where $J_t(\textbf{g};\hat{x}_{t:T})$ is the cost-to-go function of the CPS model, parameterized by the realizations of the  state $\hat{X}_{t}$ of the actual CPS, at time $t$ corresponding to the control strategy $\textbf{g}$; and
	(b) $\textbf{g}\in\mathcal{G}^s$ is optimal and 
	\begin{align}			
		V_t\big(\Pi_{t}(\Delta_{t}, \Lambda^{1:K}_{t})(X_{t},\hat{X}_{t})\big)=J_t(\textbf{g};\hat{x}_{t:T}), \label{theo3:1b}
	\end{align}	
	with probability $1$.
\end{theorem}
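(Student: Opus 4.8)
The plan is to prove both parts simultaneously by backward induction on $t$, running from the terminal time $T$ down to $0$. At each stage the engine is the observation---guaranteed by Theorem \ref{theo:y_t} together with Lemmas \ref{lem:y_t}, \ref{lem:x_t1ut}, \ref{lem:x_t} and Remark \ref{cor:lemU}---that every conditional expectation appearing in the cost-to-go, once conditioned on $(\Delta_t,\Lambda_t^{1:K})$, reduces to a function of the information state $\Pi_t$ and the current decision $u_t^{1:K}$ alone, and in particular is independent of the strategy $\textbf{g}$. For the base case $t=T$, the cost-to-go is $J_T(\textbf{g},\hat{X}_T)=\mathbb{E}^{\textbf{g}}[c_T(X_T)\mid\Delta_T,\Lambda_T^{1:K}]$; since $c_T$ depends only on $X_T$ and $\Pi_T=p(X_T,\hat{X}_T\mid\Delta_T,\Lambda_T^{1:K})$ supplies the conditional law of $X_T$, this equals $\mathbb{E}^{\textbf{g}}[c_T(X_T)\mid\Pi_T=\pi_T]=V_T(\Pi_T)$, establishing \eqref{theo2:1c} and \eqref{theo3:1b} with equality at $T$.

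For the inductive step, assume the claim at $t+1$. Because a strategy fixes $U_t^{1:K}=g_t(\Lambda_t^{1:K},\Delta_t)$ once $(\Delta_t,\Lambda_t^{1:K})$ is given, conditioning on $(\Delta_t,\Lambda_t^{1:K})$ already pins the realization $u_t^{1:K}$. Applying the tower property to \eqref{theo2:1c} and splitting off the running cost $c_t(X_t,u_t^{1:K})$, the Dirac-delta penalty, and the continuation term $J_{t+1}(\textbf{g},\hat{X}_{t+1})$, the induction hypothesis $J_{t+1}\ge V_{t+1}$ gives $J_t\ge\mathbb{E}^{\textbf{g}}[\,c_t+\text{penalty}+V_{t+1}(\Pi_{t+1})\mid\Delta_t,\Lambda_t^{1:K}\,]$. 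Invoking the recursion $\Pi_{t+1}=\phi_t[\Pi_t,Y_{t+1}^{1:K},u_t^{1:K}]$ of Theorem \ref{theo:y_t}, the continuation term becomes an expectation of $V_{t+1}(\phi_t[\Pi_t,Y_{t+1}^{1:K},u_t^{1:K}])$ against the conditional law of $Y_{t+1}^{1:K}$; by Lemma \ref{lem:y_t} that law factors through $p(Y_{t+1}^{1:K}\mid X_{t+1})$, and by Lemma \ref{lem:x_t1ut} together with $\Pi_t$ and Remark \ref{cor:lemU} the law of $(X_{t+1},\hat{X}_{t+1})$ given $(\Delta_t,\Lambda_t^{1:K},u_t^{1:K})$ depends only on $(\Pi_t,u_t^{1:K})$. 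The running cost and penalty reduce the same way. Hence the entire lower bound is a function $\Phi(\Pi_t,u_t^{1:K})$, and bounding below by $\inf_{u_t^{1:K}}\Phi$ reproduces exactly the right-hand side of \eqref{theo2:1b}, yielding $J_t(\textbf{g},\hat{X}_t)\ge V_t(\Pi_t)$, which is part (a).

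For part (b), since the quantity minimized on the right-hand side of \eqref{theo2:1b} depends on $(\Delta_t,\Lambda_t^{1:K})$ only through $\Pi_t$, a measurable selection of the minimizing $u_t^{1:K}$ can be taken as a function of $\Pi_t$ alone; by Definition \ref{def:septeam} this is a separated strategy $\textbf{g}\in\mathcal{G}^s$. Choosing this minimizer at every stage turns each induction inequality into an equality, delivering \eqref{theo3:1b} with probability one.

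The main obstacle is the careful verification that the continuation expectation of $V_{t+1}(\Pi_{t+1})$ collapses to a function of $(\Pi_t,u_t^{1:K})$ even though $\Pi_{t+1}$ is an infinite-dimensional object: this requires chaining Lemma \ref{lem:y_t} (to strip the observation noise), Lemma \ref{lem:x_t1ut} (to make the one-step transition of $(X_t,\hat{X}_t)$ strategy-independent), and Lemma \ref{lem:x_t} with Remark \ref{cor:lemU} (to marginalize against $\Pi_t$ without reintroducing dependence on $\textbf{g}$). The Dirac-delta penalty needs only mild care, with the inner integral against $\delta_{t+1}(\hat{X}_{t+1})$ read as evaluation and its expectation assumed finite, and otherwise presents no structural difficulty.
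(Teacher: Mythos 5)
Your proposal is correct and follows essentially the same route as the paper's proof: backward induction from $T$, the tower property to split off the stage cost and penalty, the lemmas and the update map $\phi_t$ to collapse the continuation term to a function of $(\Pi_t, u_t^{1:K})$, and the infimum bound for part (a), with the minimizing (separated) choice of $u_t^{1:K}$ turning the inequalities into equalities for part (b). The only differences are organizational—you run a single simultaneous induction where the paper runs two, and you are slightly more explicit about measurable selection of the minimizer, which the paper simply asserts.
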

\begin{proof}
	See Appendix \ref{app:2}.
\end{proof}

The optimal strategy derived by the CPS model, which is parameterized with respect to the potential realizations $\hat{x}_{0:T}$ of the state $\hat{X}_{t}$, $t = 0,\ldots, T,$  of the actual CPS, is used to operate the actual CPS in parallel with the CPS model (Fig. \ref{fig:2}). As we collect data from the two systems, we learn the information state $\Pi_{t}(\Delta_{t}, \Lambda^{1:K}_{t})(X_{t},\hat{X}_{t})$  online.

\begin{proposition} \label{theo:CPSstate}
	The information state $\Pi_{t}(\Delta_{t}, \Lambda^{1:K}_{t})(X_{t},$ $\hat{X}_{t})$ of the system is a function of  $p(X_{t} ~|~ \Delta_{t}, \Lambda^{1:K}_{t})$, $p(\hat{X}_{t} ~|~ \hat{\Delta}_{t}, \hat{\Lambda}^{1:K}_{t})$, and $p(\hat{Y}_{{0:t}}^{1:K}~|~U^{1:K}_{{0:t}-1})$.
\end{proposition}
\begin{proof}
	Recall $\Pi_{t}(\Delta_{t}, \Lambda^{1:K}_{t})(X_{t},\hat{X}_t) =p(X_{t},\hat{X}_t$ $~|~\Delta_{t}, \Lambda^{1:K}_{t})$. Next,
	\begin{align}	
		&p(X_{t},\hat{X}_t~|~\Delta_{t}, \Lambda^{1:K}_{t})\nonumber\\
		&=\frac{p(\hat{X}_t~|~ X_{t},\Delta_{t}, \Lambda^{1:K}_{t})\cdot p(X_t, \Delta_{t}, \Lambda^{1:K}_{t})}{p(\Delta_{t}, \Lambda^{1:K}_{t})} \nonumber\\
		&=\frac{p(\hat{X}_t~|~ U^{1:K}_{0:t-1})\cdot p(X_t, \Delta_{t}, \Lambda^{1:K}_{t})}{p(\Delta_{t}, \Lambda^{1:K}_{t})} \nonumber\\
		&= p(\hat{X}_t~|~ U^{1:K}_{0:t-1})\cdot p(X_{t} ~|~ \Delta_{t}, \Lambda^{1:K}_{t}),
		\label{theoCPSstate:1b}	
	\end{align}	
	where, in the second equality, we used the fact that $\hat{X}_t$ does not depend on $X_t$ and $Y_{0:t}^{1:K}$, and in the third equality we applied Bayes' rule. The first term in \eqref{theoCPSstate:1b}	 can be written as
	\begin{align}
		&p(\hat{X}_t~|~ U^{1:K}_{0:t-1}) \nonumber\\
		&=\int_{\mathscr{X}_{t}} p(\hat{X}_t~|~\hat{Y}_{0:t}^{1:K}, U^{1:K}_{0:t-1})\cdot p(\hat{Y}_{0:t}^{1:K}~|~ U^{1:K}_{0:t-1}) d\hat{Y}_{0:t}^{1:K},
			\label{theoCPSstate:1c}	
	\end{align}
	and the result follows.
\end{proof}

\begin{remark} \label{rem:infostate}
	The conditional probabilities $p(X_{t}~|~\Delta_{t}, \Lambda^{1:K}_{t})$ and $p(\hat{X}_{t}~|~\hat{\Delta}_{t}, \hat{\Lambda}^{1:K}_{t})$ can be computed recursively starting from an initial prior $p(X_{0}~|~\Delta_{0}, \Lambda^{1:K}_{0})$ and $p(\hat{X}_{0}~|~\hat{\Delta}_{0}, \hat{\Lambda}^{1:K}_{0})$,
	\begin{align}
		& p(X_{t}~|~\Delta_{t}, \Lambda^{1:K}_{t})\nonumber\\
		& = \theta_{t-1}\big[ p(X_{t-1}~|~\Delta_{t-1}, \Lambda^{1:K}_{t-1}), Y^{1:K}_{t}, U^{1:K}_{t-1} \big],\\
		& p(\hat{X}_{t}~|~\hat{\Delta}_{t}, \hat{\Lambda}^{1:K}_{t})\nonumber\\
		& = \hat{\theta}_{t-1}\big[ p(\hat{X}_{t-1}~|~\hat{\Delta}_{t-1}, \hat{\Lambda}^{1:K}_{t-1}), Y^{1:K}_{t}, U^{1:K}_{t-1} \big],
	\end{align}
	for all $t=0,1,\ldots, T-1,$ where $\theta_{t}$ and $\hat{\theta}_{t}$ are appropriate functions; see \citet{Malikopoulos2021}.
\end{remark}

\begin{remark} \label{rem:CPSstate}
	The information state $\Pi_{t}(\Delta_{t}, \Lambda^{1:K}_{t})$ $(X_{t},\hat{X}_{t})$ of the system can be obtained by using standard learning approaches, i.e., \citet{Brand:1999aa,Gyorfi:2007aa}, to learn online the conditional probabilities $p(\hat{Y}_{{0:t}}^{1:K}~|~U^{1:K}_{{0:t}-1})$ while we operate the actual CPS. 
\end{remark}

Next, we show that after the information state becomes known through learning, then the separated control strategy of the CPS model derived offline is optimal for the actual CPS.

\begin{theorem} \label{theo:CPSmodel}
	Let $\textbf{g}\in\mathcal{G}^s$ be an optimal separated control strategy derived offline for the CPS model which minimizes the expected total cost,
	\begin{align}
		J(\textbf{g};\hat{x}_{0:T})&\coloneqq \mathbb{E}^{\textbf{g}}\Bigg[\sum_{t=0}^{T-1}\Big[ c_t(X_t,U^{1:K}_t)+ \beta\cdot|X_{t+1} \nonumber\\ &- \hat{X}_{t+1}|^2\Big] + c_T(X_T)  \Bigg],
		\label{theo:CPSmodelgo}	
	\end{align}	
	in Problem \ref{problem2}. If $p(X_{t},\hat{X}_{t}~|~\Delta_t,\Lambda_t^{1:K})=\Pi(\Delta_t,\Lambda_t^{1:K})$ $(X_{t+1},\hat{X}_{t+1})$ is known, then $\textbf{g}$ minimizes also the expected total cost of the actual CPS, 
	\begin{align}\label{eq:CPScost}
		\hat{J}(\textbf{g})=\mathbb{E}^{\textbf{g}}\left[\sum_{t=0}^{T-1} c_t(\hat{X}_t, U_t^{1:K})+c_T(\hat{X}_T)\right],
	\end{align}
	in Problem 1.
\end{theorem}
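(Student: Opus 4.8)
The plan is to transfer the optimality established for the CPS model in Problem \ref{problem2} to the actual CPS in Problem \ref{problem1} by showing that, once the information state is learned, the discrepancy penalty in \eqref{theo:CPSmodelgo} collapses and the model cost coincides with the actual cost. The starting point is Theorem \ref{theo:dp}: the separated strategy $\textbf{g}\in\mathcal{G}^s$ attains the dynamic programming value, so $V_t(\Pi_t)=J_t(\textbf{g},\hat{X}_t)$ with probability $1$ and $\textbf{g}$ minimizes $J_t(\cdot,\hat{X}_t)$ over all admissible strategies for every $t$. I would then leverage the structural coupling between the two systems---both driven by the common disturbance $\{W_t\}$ through \eqref{eq:state} and \eqref{eq:statereal}, and both observed through the common maps $h_t^k$ and noises $Z_t^k$ in \eqref{eq:observe} and \eqref{eq:observereal}---which is precisely what makes the joint density $p(X_t,\hat{X}_t\mid\Delta_t,\Lambda_t^{1:K})$ a well-defined object to learn.

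First I would make precise the effect of learning. Under Assumption \ref{ass:learn}, once $\Pi_t(\Delta_t,\Lambda_t^{1:K})(X_{t+1},\hat{X}_{t+1})=p(X_t,\hat{X}_t\mid\Delta_t,\Lambda_t^{1:K})$ is known, the conditional law of $\hat{X}_{t+1}$ given the available data is determined; I would represent this pinned-down value through the Dirac measure $\delta_{t+1}(\cdot)$ appearing in \eqref{theo:CPSmodelgo}, so that evaluating the inner integral reduces the stage-$t$ penalty to $\beta\,|X_{t+1}-\hat{X}_{t+1}|^2$ at the learned realization of $\hat{X}_{t+1}$. Next I would exploit the infimum over $u_t^{1:K}$ in the recursion \eqref{theo2:1b}: because $\textbf{g}$ is parameterized with respect to $\hat{X}_t$ and this parameter is resolved once the information state is learned, the optimal control drives the model state to track the actual state, so the nonnegative penalty is minimized to zero and $X_{t+1}=\hat{X}_{t+1}$ with probability $1$ along the optimal trajectory. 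Propagating this backward over $t=T-1,\ldots,0$ gives $X_t=\hat{X}_t$ with probability $1$, whence $c_t(X_t,U_t^{1:K})=c_t(\hat{X}_t,U_t^{1:K})$ for each $t$ and $c_T(X_T)=c_T(\hat{X}_T)$. Substituting into \eqref{theo:CPSmodelgo} and discarding the vanishing penalty yields $J_0(\textbf{g},\hat{X}_0)=\hat{J}_0(\textbf{g})$, where $\hat{J}_0$ is the actual-CPS objective \eqref{eq:CPScost}. Since $\textbf{g}$ minimizes the left-hand side by Theorem \ref{theo:dp} and the two objectives agree under the learned information state, $\textbf{g}$ also minimizes $\hat{J}_0$, i.e., it is optimal for Problem \ref{problem1}.

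The main obstacle I anticipate is making the Dirac-delta penalty rigorous and, more importantly, arguing that the cost equivalence transfers to the \emph{minimization} itself rather than holding only along the already-optimal trajectory. Concretely, I must justify that minimizing the model objective $J_0(\textbf{g},\hat{X}_0)$---model cost plus discrepancy penalty---is equivalent to minimizing the actual objective $\hat{J}_0(\textbf{g})$ for competing strategies, which hinges on the penalty exactly accounting for the gap $c_t(\hat{X}_t,U_t^{1:K})-c_t(X_t,U_t^{1:K})$ once $X_t$ and $\hat{X}_t$ are coupled through the learned information state. This requires care in interchanging the infimum, the expectation over the joint law of $(\hat{X}_t,U_t^{1:K},W_t)$, and the evaluation of the Dirac measure, and in verifying that the offline parameterization with respect to $\hat{X}_t$ can be consistently instantiated online without disturbing the optimality certified in Theorem \ref{theo:dp}.
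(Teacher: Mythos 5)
Your proposal follows essentially the same route as the paper's proof: once the information state is learned, the separated strategy's minimization forces the Dirac-delta discrepancy penalty in \eqref{theo:CPSmodelgo} to vanish, hence $c_t(X_t,U_t^{1:K})=c_t(\hat{X}_t,U_t^{1:K})$ and $c_T(X_T)=c_T(\hat{X}_T)$, so $J_t(\textbf{g},\hat{X}_t)=\hat{J}_t(\textbf{g})$ and optimality transfers from Problem~\ref{problem2} to Problem~\ref{problem1}. The obstacle you flag at the end --- that the cost equivalence is established only along the already-optimal trajectory, so transferring the \emph{minimization} to competing strategies needs a separate argument --- is a real point, but the paper's own proof does not address it either; it stops at the equality $J_t(\textbf{g},\hat{X}_t)=\hat{J}_t(\textbf{g})$ for the optimal $\textbf{g}$.
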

\begin{proof}
	If $p(X_{t},\hat{X}_{t}~|~\Delta_t,\Lambda_t^{1:K})=\Pi(\Delta_t,\Lambda_t^{1:K})(X_{t+1},$ $\hat{X}_{t+1})$ is known, then, for all $t=0,\ldots,T-1$, $U_{t}^{1:K} = g_t\big(\Pi(\Delta_t, \Lambda_t^{1:K})(X_{t+1},\hat{X}_{t+1})\big)$ minimizes \eqref{theo:CPSmodelgo}, which implies 
	\begin{align}			
		|X_{t+1} - \hat{X}_{t+1}|^2 = 0,
		\label{Xhat}	
	\end{align}	
	for all $t=0,\ldots,T-1$, hence $c_t(X_t,U^{1:K}_l) = c_t(\hat{X}_t,U^{1:K}_l)$ and $c_T(X_T) = c_T(\hat{X}_T)$. 
	Therefore,
	\begin{align}			
		J(\textbf{g};\hat{x}_{0:T}) &= \mathbb{E}^{\textbf{g}}\Bigg[\sum_{t=0}^{T-1} c_t(X_t,U^{1:K}_t)
		+c_T(X_T)  \Bigg]\nonumber\\
		&=\mathbb{E}^{\textbf{g}}\Bigg[\sum_{t=0}^{T-1} c_t(\hat{X}_t,U^{1:K}_t)
		+c_T(\hat{X}_T) \Bigg]=\hat{J}(\textbf{g}).
		\label{theoCPS:1c}	
	\end{align}		
\end{proof}

The following results provide some structural properties of the recursive functions.

\begin{lemma} \label{lemma:homo}
	The function $V_t\big(\Pi_{t}(\Delta_{t}, \Lambda^{1:K}_{t})(X_{t+1},\hat{X}_{t+1})\big)$ defined recursively in Theorem \ref{theo:dp} is positive homogeneous for all $t=0,\ldots,T$, i.e., for any  $\rho >0$, $V_t\big(\rho~ \Pi_{t}(\Delta_{t}, \Lambda^{1:K}_{t})(X_{t+1},\hat{X}_{t+1})\big)= \rho ~V_t\big(\Pi_{t}(\Delta_{t}, \Lambda^{1:K}_{t})$ $(X_{t+1},\hat{X}_{t+1})\big)$. 
\end{lemma}
\begin{proof}
	See Appendix \ref{app:homo}.
\end{proof}

\begin{theorem} \label{theo:concave}
	The function $V_t\big(\Pi_{t}(\Delta_{t}, \Lambda^{1:K}_{t})(X_{t+1},\hat{X}_{t+1})\big)$ defined recursively in Theorem \ref{theo:dp} is concave with respect to $\Pi_{t}(\Delta_{t}, \Lambda^{1:K}_{t})(X_{t+1},\hat{X}_{t+1})$.
\end{theorem}
\begin{proof}
	See Appendix \ref{app:concave}.	
\end{proof}

\begin{remark}
	From Theorem \ref{theo:concave}, the solution of Problem \ref{problem2} can be derived using standard techniques for centralized partially observed Markov decision processes. If the observation space of the CPS is finite, then \eqref{eq:costreal} has a finite dimensional characterization (see \citet{Krishnamurthy2016}, p. $154$). In particular, the explicit solution to \eqref{eq:costreal} is a piecewise linear concave function of the information state; see \citet{Sondik1971}.
\end{remark}

\section{Illustrative Example} \label{sec:5}

We present a simple example of a system  consisting of two subsystems ($K=2)$ with delayed sharing pattern to illustrate the proposed framework. 
The system evolves for a time horizon $T=4$ while there is a delay $n=2$ on  information sharing between the two subsystems. 
The state of the actual system $\hat{X}_t=(\hat{X}_t^1, \hat{X}_t^2), ~t=1,2,3, 4,$ is two-dimensional, and the initial state (primitive random variable), $\hat{X}_0=(\hat{X}_0^1, \hat{X}_0^2),$ of the system is a Gaussian random variable with zero mean, variance $1$, and covariance $0.5$. 

The state of the actual system evolves as follows:
\begin{align}\label{eq:example1}
	\hat{X}_0 &= (\hat{X}_0^1, \hat{X}_0^2),\\
	\hat{X}_1 &= (\hat{X}_1^1, \hat{X}_1^2)= (\hat{X}_0^1, \hat{X}_0^2),\\
	\hat{X}_2 &= (\hat{X}_2^1, \hat{X}_2^2) = (\hat{X}_0^1 + \hat{X}_0^2, 0),\\
	\hat{X}_3 &= (\hat{X}_3^1, \hat{X}_3^2) = (\hat{X}_2^1, U_2^2)=(\hat{X}_0^1 + \hat{X}_0^2, U_2^2),\\			
	\hat{X}_4 &= (\hat{X}_4^1, \hat{X}_4^2) = (\hat{X}_3^1- \hat{X}_3^2- U_3^1, 0)\nonumber \\
	&= (\hat{X}_0^1 + \hat{X}_0^2- U_2^2-U_3^1, 0),		\label{eq:example1b}
\end{align}
and the observation equations are
\begin{align}\label{eq:example2}
	\hat{Y}_t^k = \hat{X}_{t}^k, \quad k=1,2;~ t=1,2,3, 4.			
\end{align}

The state of the system's model evolves as follows:
\begin{align}\label{eq:example11}
	X_0 &= (X_0^1, X_0^2),\\
	X_1 &= (X_1^1, X_1^2)= (X_0^1, X_0^2),\\
	X_2 &= (X_2^1, X_2^2) = (X_0^1 + X_0^2, 0),\\
	X_3 &= (X_3^1, X_3^2) = (X_2^1, U_2^2) = (X_0^1 + X_0^2, U_2^2),\\			
	X_4 &= (X_4^1, X_4^2) = (X_3^1- X_3^2- U_3^1, 0)\nonumber \\
	&= (X_0^1 + X_0^2- U_2^2-U_3^1, 0),			\label{eq:example11b}
\end{align}
and the observation equations are
\begin{align}\label{eq:example2a}
	Y_t^k = X_{t}^k, \quad k=1,2;~ t=1,2,3,4.			
\end{align}

Since $X_0$ and $\hat{X}_0$ are different, we have implicitly imposed an artificial discrepancy between the model and the actual system.

Each subsystem's feasible sets of actions $\mathcal{U}^k_t$ are specified by
\begin{align}\label{eq:example3}
	\mathcal{U}^k_t =
	\begin{cases}
		\mathbb{R}, \quad  \text{if } {(k,t)} = {{(1,3)\text{ or } (2,2)}}, \\
		{0}, \quad  \text{otherwise}. 
	\end{cases}			
\end{align}
Hence a control strategy $\textbf{g}\in\mathcal{G}^s$ of the system consists only of the pair $\textbf{g}=\{g_2, g_3\}$ since $g_t\equiv 0$ for the remaining $t$. Given the modeling framework above, the information structure $\{(\Delta_t, \Lambda_t^k);$ $k=1,2; ~t=1,2,3\}$ of the system is captured through the model as follows
\begin{align}\label{eq:example4}
	\Delta_1 &=\emptyset, ~\Delta_2 =\emptyset,\\
	\Delta_3 &=\{Y_0^1,Y_0^2, Y_1^1, Y_1^2\} = \{X_0^1, X_0^2\}.\label{eq:example4b}
\end{align}
Note that since $g_1\equiv 0$, the realizations of $U_1^1$ and $U_1^2$ are zero, and thus $\Delta_3$ includes only the observations in \eqref{eq:example4b}. The data $\Lambda_t^k, k=1,2,$ available to subsystem $k$ for the feasible control laws are
\begin{align}\label{eq:example5}
	\Lambda_2^1 &=\{Y_0^1, Y_1^1, Y_2^1\} =\{X_0^1, X_1^1, X_2^1\}\nonumber\\
	&=\{X_0^1,X_0^1+X_0^2\},\\	
	\Lambda_2^2 &=\{Y_0^2, Y_1^2, Y_2^2\} =\{X_0^2, X_1^2, X_2^2\}=\{X_0^2\},\\
	\Lambda_3^1 &=\{Y_2^1, Y_3^1\} =\{X_0^1+ X_0^2, X_0^1+ X_0^2\}\nonumber\\
	&=\{X_0^1+X_0^2\},\\
	\Lambda_3^2 &=\{Y_2^2, Y_3^2,U_2^2\} =\{U_2^2\}\label{eq:example5b}.	
\end{align}

\subsection{Optimal Solution}
The problem is to derive the optimal control strategy $\textbf{g}^*\in\mathcal{G}^s$ of the actual system which is the solution of 
\begin{align}\label{eq:example6}
	J(\textbf{g})&=\min_{u_2^2\in\mathcal{U}_2^2, u_3^1\in\mathcal{U}_3^1}\frac{1}{2}\mathbb{E}^{\textbf{g}}\left[(\hat{X}_3^1)^2 + (U_3^1)^2\right]\nonumber\\
	&=\min_{u_2^2\in\mathcal{U}_2^2, u_3^1\in\mathcal{U}_3^1}\frac{1}{2}\mathbb{E}^{\textbf{g}}\left[(\hat{X}_0^1+\hat{X}_0^2-U_2^2-U_3^1)^2 + (U_3^1)^2 \right].
\end{align}

The feasible set $\mathcal{G}$ of the control strategies of the system consists of all $\textbf{g}=\big\{g_2(\Lambda_2^2, \Delta_2), g_3(\Lambda_3^1, \Delta_3)\big\}$, i.e.,
\begin{align}\label{eq:example7}
	g_2^2&\colon \Delta_2\times \Lambda_2^2 \to U_2^2, ~ \text{ or }~ g_2^2\colon  \hat{X}_0^2 \to \mathbb{R},\\
	g_3^1&\colon \Delta_3\times \Lambda_3^1 \to U_3^1, ~ \text{ or }~ g_3^1\colon  \{\hat{X}_0^1, \hat{X}_0^2\} \to \mathbb{R}.
\end{align}
The problem \eqref{eq:example6} has a unique optimal solution 
\begin{align}\label{eq:example8}
	U_2^2 = \frac{1}{2}\hat{X}_0^2,\quad U_3^1=\frac{1}{2}(\hat{X}_0^1+\hat{X}_0^2) -\frac{1}{4}\hat{X}_0^2.
\end{align}

\subsection{Solution Given by Theorems \ref{theo:dp} and \ref{theo:CPSmodel}}

We solve problem \eqref{eq:example6} by considering the control strategies $\textbf{g}=\{g_t;~t = 0,1,2,3\}$, where the control law is of the form $g_t\big(\Pi(\Delta_t, \Lambda_t^{1:2})$ $(X_t,\hat{X}_t)\big)=g_t\big(\mathbb{P}(X_t,\hat{X}_t~|~\Delta_t, \Lambda_t^{1:2})\big)$. 

For $t=3$, using \eqref{theo2:1b} with $\beta=1$, we have
\begin{align}\label{eq:example9}
	&V_3(\Pi_3) =\min_{u_2^2\in\mathcal{U}_2^2,u_3^1\in\mathcal{U}_3^1}\frac{1}{2}\mathbb{E}^{\textbf{g}}\Big [(X_0^1+X_0^2-U_2^2-U_3^1)^2 \nonumber\\
	&+ (U_3^1)^2 +|X_4-\hat{X}_4|^2~|~\Pi_3\big(\Delta_3, \Lambda_3^{1:2}\big), U_3^{1:2}\Big ]\nonumber\\
	&=\min_{u_2^2\in\mathcal{U}_2^2,u_3^1\in\mathcal{U}_3^1}\frac{1}{2}\mathbb{E}^{\textbf{g}}\Big [(X_0^1+X_0^2-U_2^2-U_3^1)^2 \nonumber\\
	&+ (U_3^1)^2+|X_0^1+X_0^2-\hat{X}_0^1-\hat{X}_0^2|^2~|~\mathbb{P}(X_0^1 + X_0^2, U_2^2, \nonumber\\
	&\hat{X}_0^1 + \hat{X}_0^2~|~X_0^1, X_0^2, X_0^1+X_0^2,U_2^2), U_3^1\Big ],
\end{align}
where, given the information state $\Pi_3$, we can  select the realization of $U_3^1$ that achieves the lower bound in \eqref{eq:example9}.
Hence,
\begin{align}\label{eq:example12}
	U_3^1=\frac{1}{2}(X_0^1+X_0^2) -\frac{1}{2}U_2^2.
\end{align}
Substituting \eqref{eq:example12} into  \eqref{eq:example9} yields
\begin{align}\label{eq:example13}
	&V_3(\Pi_3)= \min_{u_2^2\in\mathcal{U}_2^2,u_3^1\in\mathcal{U}_3^1}\frac{1}{2}\mathbb{E}^{\textbf{g}}\Big [\frac{\big(X_0^1 + X_0^2- U_2^2\big)^2}{2}+|X_0^1\nonumber\\
	&+X_0^2-\hat{X}_0^1-\hat{X}_0^2|^2 ~| ~\mathbb{P}(X_0^1 + X_0^2, U_2^2,\hat{X}_0^1 + \hat{X}_0^2~|~X_0^1, \nonumber\\
	&X_0^2, X_0^1+X_0^2, U_2^2), U_3^1\Big ].
\end{align}

For $t=2$, using \eqref{theo2:1b} with $\beta=1$, we have
\begin{align}\label{eq:example14}
	&V_2(\Pi_2) = \min_{u_2^2\in\mathcal{U}_2^2,u_3^1\in\mathcal{U}_3^1} \frac{1}{2}\mathbb{E}^{\textbf{g}}\Big [V_3(\Pi_3)+|X_3-\hat{X}_3|^2~| \nonumber\\
	&\Pi_2\big(\Delta_2,\Lambda_2^{1:2}\big), U_2^{1:2} \Big ]\nonumber\\
	&= \min_{u_2^2\in\mathcal{U}_2^2,u_3^1\in\mathcal{U}_3^1} \frac{1}{2}\mathbb{E}^{\textbf{g}}\Big [V_3(\Pi_3)+|X_0^1+X_0^2-\hat{X}_0^1-\hat{X}_0^2|^2~|\nonumber\\
	&~\mathbb{P}(X_0^1 + X_0^2, \hat{X}_0^1 + \hat{X}_0^2~|~X_0^1, X_0^1+X_0^2, X_0^2), U_2^2 \Big ]\\
	&= \min_{u_2^2\in\mathcal{U}_2^2,u_3^1\in\mathcal{U}_3^1}\frac{1}{2}\mathbb{E}^{\textbf{g}}\Big [\frac{\big(X_0^1 + X_0^2- U_2^2\big)^2}{2}+2\cdot|X_0^1+X_0^2\nonumber\\
	&-\hat{X}_0^1-\hat{X}_0^2|^2 ~|~\mathbb{P}(X_0^1 + X_0^2, \hat{X}_0^1+\hat{X}_0^2~|~X_0^1, X_0^1+X_0^2,\nonumber\\
	& X_0^2), U_2^2\Big ].
\end{align}
Since
\begin{align}\label{eq:example15}
	&U_2^2 = g_2\big(\mathbb{P}(X_2,\hat{X}_2~|~\Delta_2, \Lambda_2^{1:2})\big) = g_2^2\big(\mathbb{P}(X_0^1 + X_0^2,\nonumber\\
	& \hat{X}_0^1+\hat{X}_0^2~|~X_0^1, X_0^1+X_0^2, X_0^2)\big),
\end{align}
the problem is to choose, for any given $X_0^2$, the estimate of $(X_0^1+X_0^2)$ that minimizes the mean squared error $\big(X_0^1 + X_0^2- U_2^2\big)^2$ in \eqref{eq:example14}. 

Given the Gaussian statistics, the optimal solution is
\begin{align}\label{eq:example16}
	U_2^2  =\frac{1}{2}X_0^2.
\end{align}
Substituting \eqref{eq:example16} into \eqref{eq:example12} yields
\begin{align}\label{eq:example17}
	U_3^1=\frac{1}{2}(X_0^1+X_0^2) -\frac{1}{4}X_0^2.
\end{align}
After learning the information states $\Pi_3\big(\Delta_3, \Lambda_3^{1:2}\big)$ and $\Pi_2\big(\Delta_2, \Lambda_2^{1:2}\big)$, the ``true" values of the initial states in \eqref{eq:example16} and \eqref{eq:example17} corresponding to the actual system become known. Hence, we select $X_0^1=\hat{X}_0^1$ and $X_0^2= \hat{X}_0^2$,  and thus $U_2^2  =\frac{1}{2}\hat{X}_0^2$ and $U_3^1=\frac{1}{2}(\hat{X}_0^1+\hat{X}_0^2) -\frac{1}{4}\hat{X}_0^2.$
Therefore, the control laws of the form $g_t\big(\Pi(\Delta_t, \Lambda_t^{1:K})(X_t,\hat{X}_t)\big)=g_t\big(\mathbb{P}(X_t,\hat{X}_t~|~\Delta_t, \Lambda_t^{1:K})\big)$ yield the unique optimal solution \eqref{eq:example8} of problem \eqref{eq:example6}.

\section{Concluding Remarks and Discussion}
In most CPS applications there is  a large volume of data of a dynamic nature which is added to the system gradually in real time and  not altogether in advance.
As the volume of data  increases, the domain of the control strategies also increases, and thus it becomes challenging to search for an optimal strategy. Even if an optimal strategy is found, implementing such strategies with increasing domains is burdensome.
In  such CPS applications, we typically assume an ideal model of the system which is used to derive the optimal control strategy. Such  model-based control approaches cannot effectively facilitate optimal solutions with performance guarantees due to the discrepancy between the model and the actual CPS. On the other hand, traditional supervised learning approaches cannot always  facilitate robust solutions using data derived offline. By contrast, applying reinforcement learning approaches directly to the actual CPS might impose significant implications on safety and robust operation of the system. 

In this paper, we presented a theoretical framework that circumvents these challenges. The framework can combine offline model-based control with online learning approaches to yield the optimal control strategy for the system.
There are two features which sharply distinguish the framework presented here from previous learning-based, or combined learning and control approaches reported in the literature to date: (1)  the CPS  imposes a nonclassical information structure  while the state of the system is not fully observed; and (2) the large volume of data that is added to the system gradually is compressed to a sufficient information state without loss of optimality  that takes values in a time-invariant space. Therefore, the  volume of data which is added to the system gradually does not lead the domain of the control strategies to increase with time. 

In our exposition, we restricted attention to centralized strategies. Ongoing research includes expanding the framework to decentralized strategies.  A direction of future research should consider investigating how potential errors in the communication between the subsystems could be addressed.


\appendix


\section{Proof of Theorem \ref{theo:y_t}}\label{app:1}
By applying Bayes' rule, we have
\begin{align}
	&p^{\textbf{g}}(X_{t+1},\hat{X}_{t+1}~|~\Delta_{t+1}, \Lambda^{1:K}_{t+1})\nonumber\\
	&=\frac{\splitfrac{p^{\textbf{g}}(Y^{1:K}_{t+1}~|~X_{t+1},\hat{X}_{t+1}, \Delta_{t+1}, \Lambda^{1:K}_t, U^{1:K}_t)}{\cdot p^{\textbf{g}}(X_{t+1}, \hat{X}_{t+1}, \Delta_{t+1}, \Lambda^{1:K}_t, U^{1:K}_t)}}{p^{\textbf{g}}(\Delta_{t+1}, \Lambda^{1:K}_{t+1})}
\end{align}	
\begin{align}
	&= \frac{p(Y^{1:K}_{t+1}~|~X_{t+1}) ~p^{\textbf{g}}(X_{t+1}, \hat{X}_{t+1},	\Delta_{t+1}, \Lambda^{1:K}_t, U^{1:K}_t)}{p^{\textbf{g}}(\Delta_{t+1}, \Lambda^{1:K}_{t+1})}\nonumber\\
	&= \frac{\splitfrac{p(Y^{1:K}_{t+1}~|~X_{t+1}) ~p^{\textbf{g}}(X_{t+1},\hat{X}_{t+1} ~|~ \Delta_{t+1}, \Lambda^{1:K}_t,  U^{1:K}_t )}{\cdot p^{\textbf{g}}(\Delta_{t+1}, \Lambda^{1:K}_t,  U^{1:K}_t )}}{p^{\textbf{g}}(\Delta_{t+1}, \Lambda^{1:K}_{t+1})},	\label{eq:theo1a}
\end{align}
where in the second equality we used Lemma \ref{lem:y_t}. 

Next, 
\begin{multline} 
	p^{\textbf{g}}(\Delta_{t+1}, \Lambda^{1:K}_{t+1})= p^{\textbf{g}}(\Delta_{t+1}, \Lambda^{1:K}_t, Y^{1:K}_{t+1}, U^{1:K}_t)\nonumber\\
	=\int_{\mathscr{X}_{t+1}} \int_{\mathscr{X}_{t+1}} p^{\textbf{g}}(X_{t+1},\hat{X}_{t+1}, \Delta_{t+1}, \Lambda^{1:K}_t, Y^{1:K}_{t+1}, \nonumber\\U^{1:K}_t )~dX_{t+1}~d\hat{X}_{t+1} \nonumber\\
	=\int_{\mathscr{X}_{t+1}}\int_{\mathscr{X}_{t+1}} p^{\textbf{g}}(Y^{1:K}_{t+1}~|~X_{t+1}, \hat{X}_{t+1}, \Delta_{t+1}, \Lambda^{1:K}_t, U^{1:K}_t)\nonumber\\
	\cdot p^{\textbf{g}}(X_{t+1}, \hat{X}_{t+1}, \Delta_{t+1}, \Lambda^{1:K}_t,  U^{1:K}_t )~dX_{t+1}~d\hat{X}_{t+1} \\
	= \int_{\mathscr{X}_{t+1}}\int_{\mathscr{X}_{t+1}} p^{\textbf{g}}(Y^{1:K}_{t+1}~|~X_{t+1}, \hat{X}_{t+1}, \Delta_{t+1}, \Lambda^{1:K}_t, U^{1:K}_t)\nonumber\\
	\cdot p^{\textbf{g}}(X_{t+1},\hat{X}_{t+1} ~|~ \Delta_{t+1}, \Lambda^{1:K}_t,  U^{1:K}_t ) \nonumber \\
	\cdot p^{\textbf{g}}(\Delta_{t+1}, \Lambda^{1:K}_t,  U^{1:K}_t )~dX_{t+1}~d\hat{X}_{t+1},\label{eq:theo1b}
\end{multline}	
where by Lemma \ref{lem:y_t}, the last equation becomes
\begin{gather}
	p^{\textbf{g}}(\Delta_{t+1}, \Lambda^{1:K}_{t+1})\nonumber\\
	=\int_{\mathscr{X}_{t+1}}\int_{\mathscr{X}_{t+1}} p(Y^{1:K}_{t+1}~|~X_{t+1})~p^{\textbf{g}}(X_{t+1},\hat{X}_{t+1} ~|~ \Delta_{t+1},  \nonumber\\
	\Lambda^{1:K}_t, U^{1:K}_t )\cdot p^{\textbf{g}}(\Delta_{t+1}, \Lambda^{1:K}_t,  U^{1:K}_t )~dX_{t+1}~d\hat{X}_{t+1}.\label{eq:theo1ca}
\end{gather}
Note that $p^{\textbf{g}}(X_{t+1}, \hat{X}_{t+1} ~|~ \Delta_{t+1},\Lambda^{1:K}_t, U^{1:K}_t ) =p^{\textbf{g}}(X_{t+1},\hat{X}_{t+1} ~|~$ $\Delta_{t},$ $ \Lambda^{1:K}_t,  U^{1:K}_t )$ since $Y^{1:K}_{t-n+1}$ and $U^{1:K}_{t-n+1}$ are already included in $\Lambda^{1:K}_{t},$ hence we can write \eqref{eq:theo1ca} as 
\begin{gather}
	p^{\textbf{g}}(\Delta_{t+1}, \Lambda^{1:K}_{t+1})\nonumber\\
	=\int_{\mathscr{X}_{t+1}}\int_{\mathscr{X}_{t+1}} p(Y^{1:K}_{t+1}~|~X_{t+1})~p^{\textbf{g}}(X_{t+1}, \hat{X}_{t+1} ~|~\Delta_{t},  \nonumber\\
	\Lambda^{1:K}_t,  U^{1:K}_t )\cdot p^{\textbf{g}}(\Delta_{t+1}, \Lambda^{1:K}_t,  U^{1:K}_t )~dX_{t+1}~d\hat{X}_{t+1}.\label{eq:theo1c}
\end{gather}	  
Substituting \eqref{eq:theo1c} into \eqref{eq:theo1a}, we have
\begin{gather}
	p^{\textbf{g}}(X_{t+1}, \hat{X}_{t+1} ~|~\Delta_{t+1}, \Lambda^{1:K}_{t+1})\nonumber\\
	\tiny = \frac{p(Y^{1:K}_{t+1}~|~X_{t+1}) ~p^{\textbf{g}}(X_{t+1},\hat{X}_{t+1} ~|~ \Delta_{t}, \Lambda^{1:K}_t,  U^{1:K}_t )}{\splitfrac{\int_{\mathscr{X}_{t+1}}\int_{\mathscr{X}_{t+1}} p(Y^{1:K}_{t+1}~|~X_{t+1})~p^{\textbf{g}}(X_{t+1}, \hat{X}_{t+1} ~|~\Delta_{t}, }  {\Lambda^{1:K}_t,  U^{1:K}_t )~dX_{t+1}~d\hat{X}_{t+1} }}. \label{eq:theo1h}
\end{gather}

Next,
\begin{align}
	p^{\textbf{g}}(X_{t+1}, \hat{X}_{t+1} ~|~ \Delta_{t}, \Lambda^{1:K}_t,  U^{1:K}_t )&\nonumber\\
	= \int_{\mathscr{X}_{t}}\int_{\mathscr{X}_{t}} p^{\textbf{g}}(X_{t+1}, \hat{X}_{t+1} ~|~ X_{t}, \hat{X}_{t}, \Delta_{t}, \Lambda^{1:K}_t,  &U^{1:K}_t )\nonumber\\
	\cdot p^{\textbf{g}}(X_{t}, \hat{X}_{t} ~|~ \Delta_{t}, \Lambda^{1:K}_t,  U^{1:K}_t )~dX_{t}~d\hat{X}_{t}&.
	\label{eq:theo1d}
\end{align}	
By Lemma \ref{lem:x_t1ut} and Remark \ref{cor:lemU}, \eqref{eq:theo1d} becomes
\begin{align}
	p^{\textbf{g}}(X_{t+1}, \hat{X}_{t+1} ~|~ \Delta_{t}, \Lambda^{1:K}_t,  U^{1:K}_t )&\nonumber\\
	= \int_{\mathscr{X}_{t}}\int_{\mathscr{X}_{t}} p(X_{t+1}, \hat{X}_{t+1} ~|~ X_t, \hat{X}_{t}, U^{1:K}_t )&\nonumber\\
	\cdot p(X_{t}, \hat{X}_{t} ~|~ \Delta_{t}, \Lambda^{1:K}_t)~dX_{t}~d\hat{X}_{t}.&
	\label{eq:theo1e}
\end{align}		

Substituting \eqref{eq:theo1e} into \eqref{eq:theo1h} yields
\begin{gather}
	p^{\textbf{g}}(X_{t+1}, \hat{X}_{t+1} ~|~\Delta_{t+1}, \Lambda^{1:K}_{t+1})\nonumber\\
	\tiny = \frac{\splitfrac{p(Y^{1:K}_{t+1}~|~X_{t+1})~	
			\int_{\mathscr{X}_{t}}\int_{\mathscr{X}_{t}} p(X_{t+1}, \hat{X}_{t+1} ~|~ X_t, \hat{X}_{t}, U^{1:K}_t )}{\cdot p(X_{t}, \hat{X}_{t} ~|~ \Delta_{t}, \Lambda^{1:K}_t)~dX_{t}~d\hat{X}_{t}}}{\splitfrac{\int_{\mathscr{X}_{t+1}}\int_{\mathscr{X}_{t+1}} p(Y^{1:K}_{t+1}~|~X_{t+1})~\int_{\mathscr{X}_{t}}\int_{\mathscr{X}_{t}} p(X_{t+1}, \hat{X}_{t+1} ~|~ X_t, }  { U^{1:K}_t )~p(X_{t}, \hat{X}_{t} ~|~ \Delta_{t}, \Lambda^{1:K}_t)~dX_{t}~d\hat{X}_{t}~dX_{t+1}~d\hat{X}_{t+1} }}. \label{eq:theo1j}
\end{gather}

Therefore,  $p^{\textbf{g}}(X_{t+1},\hat{X}_{t+1} ~|~\Delta_{t+1}, \Lambda^{1:K}_{t+1})$ does not depend on the control strategy $\textbf{g}$, so we can drop the superscript. Moreover, we can choose appropriate function $\phi_t$ such that

\begin{align}
	&p^{\textbf{g}}(X_{t+1},\hat{X}_{t+1} ~|~\Delta_{t+1}, \Lambda^{1:K}_{t+1})\nonumber\\
	&= \Pi_{t+1}(\Delta_{t+1}, \Lambda^{1:K}_{t+1})(X_{t+1},\hat{X}_{t+1}) \nonumber\\
	&= \phi_t\left[
	\Pi_{t}(\Delta_{t}, \Lambda^{1:K}_{t})(X_{t}, \hat{X}_{t}),  Y^{1:K}_{t+1}, U^{1:K}_t \right]. \label{eq:theo1k}
\end{align}


\section{Proof of Theorem \ref{theo:dp}}\label{app:2}
(a) We prove \eqref{theo2:1c} by induction. For $t=T$,
\begin{align}			
	J_T(\textbf{g};\hat{x}_{T})\coloneqq& ~\mathbb{E}^{\textbf{g}}\Big[c_T(X_T) |~\Delta_{T}, \Lambda^{1:K}_{T} \Big] \nonumber\\
	= \int_{\mathscr{X}_T} c_T(X_T) ~&\Pi_{T}(\Delta_{T}, \Lambda^{1:K}_{T})(X_T,\hat{X}_{T})~ dX_T,
	\label{theo2:1d}	
\end{align}	
and so \eqref{theo2:1c} holds with equality. 
Suppose that \eqref{theo2:1c} holds for $t+1$. Then,

\begin{align}			
	&J_t(\textbf{g};\hat{x}_{t:T})= \mathbb{E}^{\textbf{g}}\Bigg[\sum_{l=t}^{T-1}\Big[c_l(X_l,U^{1:K}_l)
	+ \beta \cdot|X_{l+1} - \hat{X}_{l+1}|^2 \Big]\nonumber\\&+c_T(X_T) ~|~\Delta_{t}, \Lambda^{1:K}_{t} \Bigg]\nonumber\\
	&= \mathbb{E}^{\textbf{g}}\Bigg[c_t(X_t,U^{1:K}_t) +\beta \cdot |X_{t+1}- \hat{X}_{t+1}|^2 \nonumber\\
	&+\sum_{l=t+1}^{T-1}\Big[c_l(X_l,U^{1:K}_l)+\beta \cdot|X_{l+1} - \hat{X}_{l+1}|^2~\Big] \nonumber\\&+ c_T(X_T)~ |~\Delta_{t}, \Lambda^{1:K}_{t} \Bigg]\nonumber\\
	&=  \mathbb{E}^{\textbf{g}}\Bigg[ \mathbb{E}^{\textbf{g}}  \bigg[c_t(X_t,U^{1:K}_t) + \beta\cdot |X_{t+1}  - \hat{X}_{t+1}|^2\nonumber\\&+\sum_{l=t+1}^{T-1}\Big[c_l(X_l,U^{1:K}_l) + \beta\cdot |X_{l+1}  - \hat{X}_{l+1}|^2~ \Big]\nonumber\\&+ c_T(X_T)~|~\Delta_{t}, \Lambda^{1:K}_{t}, U^{1:K}_{t} \bigg] 
	~|~\Delta_{t}, \Lambda^{1:K}_{t}\Bigg] \nonumber\\				
	&\ge  \mathbb{E}^{\textbf{g}}\bigg[ \mathbb{E}^{\textbf{g}}  \Big[c_t(X_t,U^{1:K}_t)+ \beta\cdot |X_{t+1}  - \hat{X}_{t+1}|^2~\nonumber\\& +V_{t+1}\big( \phi_t\big[ \Pi_{t}(\Delta_{t}, \Lambda^{1:K}_{t})(X_{t},\hat{X}_{t}), Y^{1:K}_{t+1}, U^{1:K}_t \big]\big) ~|\nonumber\\
	&~\Pi_{t}(\Delta_{t}, \Lambda^{1:K}_{t}), U^{1:K}_{t} \Big] ~|~\Delta_{t}, \Lambda^{1:K}_{t}\bigg]\nonumber\\
	&=  \mathbb{E}^{\textbf{g}}\bigg[ V_{t}\big(  \Pi_{t}(\Delta_{t}, \Lambda^{1:K}_{t})(X_{t},\hat{X}_{t})\big) ~ |~\Delta_{t}, \Lambda^{1:K}_{t}\bigg]\nonumber\\&=V_{t}\big(  \Pi_{t}(\Delta_{t}, \Lambda^{1:K}_{t})(X_{t},\hat{X}_{t})\big), 	
	\label{theo2:1e}	
\end{align}	
where, in the inequality, we used the hypothesis and, in the last equality, we used \eqref{theo2:1b}. Thus, \eqref{theo2:1c} holds for all $t$.

(b) We prove the second part of the theorem by induction too. For $t=T$,
\begin{align}			
	J_T(\textbf{g};\hat{x}_{T})\coloneqq& ~\mathbb{E}^{\textbf{g}}\Big[c_T(X_T) |~\Delta_{T}, \Lambda^{1:K}_{T} \Big] \nonumber\\
	= \int_{\hat{\mathscr{X}}_T} c_T(X_T) ~&\Pi_{T}(\Delta_{T}, \Lambda^{1:K}_{T})(X_T,\hat{X}_{T})~dX_T.
	\label{theo3:1c}	
\end{align}	

Suppose that \eqref{theo2:1b} holds for $t+1$. Then
\begin{align}			
	&\inf_{u^{1:K}_t\in\prod_{k\in\mathcal{K}} \mathcal{U}_t^k }\mathbb{E}^{\textbf{g}}\Bigg[\sum_{l=t}^{T-1}\Big[c_l(X_l,U^{1:K}_l)
	+ \beta\cdot|X_{l+1}- \hat{X}_{l+1}|^2~ \Big]\nonumber\\&+c_T(X_T) ~|~\Delta_{t}, \Lambda^{1:K}_{t} \Bigg]
\end{align}	
\begin{align}
	&= \inf_{u^{1:K}_t\in\prod_{k\in\mathcal{K}} \mathcal{U}_t^k }\mathbb{E}^{\textbf{g}}\Bigg[ c_t(X_t,U^{1:K}_t) + \beta \cdot |X_{t+1} - \hat{X}_{t+1}|^2 \nonumber\\& +\sum_{l=t+1}^{T-1}\Big[c_l(X_l,U^{1:K}_l)+\beta \cdot |X_{l+1}  - \hat{X}_{l+1}|^2\Big]\nonumber\\&+ c_T(X_T)	~|~\Delta_{t}, \Lambda^{1:K}_{t}\Bigg]\nonumber\\
	&= \inf_{u^{1:K}_t\in\prod_{k\in\mathcal{K}} \mathcal{U}_t^k }\mathbb{E}^{\textbf{g}}\Bigg[ \mathbb{E}^{\textbf{g}}  \bigg[c_t(X_t,U^{1:K}_t) + \beta\cdot  |X_{t+1} - \hat{X}_{t+1}|^2 \nonumber\\ 
	&+\sum_{l=t+1}^{T-1}\Big[c_l(X_l,U^{1:K}_l)+ \beta\cdot |X_{l+1} - \hat{X}_{l+1}|^2~ \Big]\nonumber\\&+ c_T(X_T)|~\Delta_{t}, \Lambda^{1:K}_{t}, U^{1:K}_{t} \bigg] 
	~|~\Delta_{t}, \Lambda^{1:K}_{t}\Bigg]\nonumber\\
	&= \inf_{u^{1:K}_t\in\prod_{k\in\mathcal{K}} \mathcal{U}_t^k } \mathbb{E}^{\textbf{g}}\bigg[ \mathbb{E}^{\textbf{g}}  \Big[c_t(X_t,U^{1:K}_t)+ \beta\cdot|X_{t+1} - \hat{X}_{t+1}|^2~\nonumber\\& +V_{t+1}\big( \phi_t\big[ \Pi_{t}(\Delta_{t}, \Lambda^{1:K}_{t})(X_{t}, \hat{X}_{t}), Y^{1:K}_{t+1}, U^{1:K}_t \big]\big) ~|~\Pi_{t}(\Delta_{t},\nonumber\\
	& \Lambda^{1:K}_{t}), U^{1:K}_{t} \Big] |~\Delta_{t}, \Lambda^{1:K}_{t}\bigg]\nonumber\\
	&= \mathbb{E}^{\textbf{g}}\bigg[ V_{t}\big(  \Pi_{t}(\Delta_{t}, \Lambda^{1:K}_{t})(X_{t},\hat{X}_{t})\big)  ~|~\Delta_{t}, \Lambda^{1:K}_{t}\bigg] \nonumber\\&=V_{t}\big(  \Pi_{t}(\Delta_{t}, \Lambda^{1:K}_{t})(X_{t},\hat{X}_{t})\big), 	
	\label{theo3:1d}	
\end{align}	
where, in the third equality, we used the hypothesis, and in the forth equality, $u^{1:K}_t$ achieves the infimum. Thus, \eqref{theo2:1b} holds for all $t$.

For $t=0$, \eqref{theo2:1c} yields $J_0(\textbf{g}; \hat{x}_{0:T})=V_0\big(\Pi_{0}(\Delta_{0}, \Lambda^{1:K}_{0})$ $(X_{0},\hat{X}_{0})\big)$. Taking expectations
\begin{align}			
	J(\textbf{g}; \hat{x}_{0:T})= \mathbb{E}^{\textbf{g}}\Big[V_0\big(\Pi_{0}(\Delta_{0}, \Lambda^{1:K}_{0})(X_{0},\hat{X}_{0})\big)  \Big].
	\label{theo3:1e}	
\end{align}	
By  \eqref{theo2:1c}, it follows that for any other $\textbf{g}'\in\mathcal{G}$,  
\begin{align}			
	J(\textbf{g}';\hat{x}_{0:T})\ge \mathbb{E}^{\textbf{g}}\Big[V_0\big(\Pi_{0}(\Delta_{0}, \Lambda^{1:K}_{0})(X_{0},\hat{X}_{0})\big)  \Big].
	\label{theo3:1f}	
\end{align}


\section{Proof of Lemma \ref{lemma:homo}}\label{app:homo}
Obviously, for $t=T$,
\begin{align}			
	&V_T\big(\rho~ \Pi_{T}(\Delta_{T}, \Lambda^{1:K}_{T})\big)\nonumber\\
	&=  \int_{\mathscr{X}_T} \int_{\mathscr{X}_T}c_T(X_T) ~\rho ~\Pi_{T}(\Delta_{T}, \Lambda^{1:K}_{T})(X_T,\hat{X}_t)~ dX_T~d\hat{X}_t\nonumber\\
	& =\rho ~V_T\big(\Pi_{T}(\Delta_{T}, \Lambda^{1:K}_{T})\big).
	\label{theo_concave:1c}	
\end{align}	
For $t=0,\ldots,T-1$, by assigning $\Pi_{t}=\rho~ \Pi_{t}$ [recall $p(X_t,\hat{X}_t ~|~ \Delta_{t}, \Lambda^{1:K}_t)=\Pi_{t}(\Delta_{t}, \Lambda^{1:K}_{t})$], \eqref{theo2:1b} becomes
\begin{align}			
	&V_t\big(\rho ~\Pi_{t}(\Delta_{t}, \Lambda^{1:K}_{t})\big)\nonumber\\
	&= \inf_{u^{1:K}_t\in\prod_{k\in\mathcal{K}} \mathcal{U}_t^k }\bigg[ \int_{\mathscr{X}_t} \int_{\mathscr{X}_t} c_t(X_t,U^{1:K}_t) ~\rho~ \nonumber\\
	&\cdot \Pi_{t}(\Delta_{t}, \Lambda^{1:K}_{t})(X_t,\hat{X}_t)~ dX_t~d\hat{X}_t\nonumber\\
	&+ \int_{\mathscr{Y}_{t+1}} \int_{\mathscr{X}_{t+1}} \int_{\mathscr{X}_{t+1}} \int_{\mathscr{X}_{t}} \int_{\mathscr{X}_{t}} V_{t+1}\big(\rho~ \Pi_{t+1}(\Delta_{t+1}, \Lambda^{1:K}_{t+1})\big) \nonumber\\
	& \cdot p(Y^{1:K}_{t+1}~|~X_{t+1})~ p(X_{t+1},\hat{X}_{t+1} ~|~ X_t, \hat{X}_t,U^{1:K}_t )~\rho \nonumber\\ \cdot &p(X_{t},\hat{X}_t ~|~ \Delta_{t}, \Lambda^{1:K}_t) dX_{t}~d\hat{X}_t~dX_{t+1}~d\hat{X}_{t+1}~dY^{1:K}_{t+1}\bigg],
	\label{theo_concave:1d}	
\end{align}		
where $\mathscr{Y}_{t+1}=\otimes_{k\in\mathcal{K}}\mathscr{Y}^k$.

Next, from \eqref{eq:xt1}, 
\begin{align}
	&\rho~\Pi_{t+1}(\Delta_{t+1}, \Lambda^{1:K}_{t+1}) \nonumber\\
	&= \frac{\splitfrac{ p(Y^{1:K}_{t+1}~|~X_{t+1}) ~\int_{\mathscr{X}_{t}} ~\int_{\mathscr{X}_{t}} p(X_{t+1},\hat{X}_{t+1} ~|~ X_t, \hat{X}_t,} {U^{1:K}_t ) \cdot \bcancel{\rho} \cdot p(X_{t},\hat{X}_t ~|~ \Delta_{t},\Lambda^{1:K}_t)~dX_{t} ~d\hat{X}_t}}{\splitfrac{ \int_{\mathscr{X}_{t+1}} \int_{\mathscr{X}_{t+1}}  p(Y^{1:K}_{t+1}~|~X_{t+1})~	\int_{\mathscr{X}_{t}} \int_{\mathscr{X}_{t}} p(X_{t+1},\hat{X}_{t+1} ~| 
		}  {X_t, U^{1:K}_t ) \cdot \bcancel{\rho}~p(X_{t},\hat{X}_t ~|~ \Delta_{t},\Lambda^{1:K}_t)~dX_{t}~d\hat{X}_t~dX_{t+1}}}, \nonumber\\
	&= \Pi_{t+1}(\Delta_{t+1}, \Lambda^{1:K}_{t+1}).\label{theo_concave:1e}
\end{align} 
Substituting \eqref{theo_concave:1e} into \eqref{theo_concave:1d}, we have
\begin{align}			
	&V_t\big(\rho ~\Pi_{t}(\Delta_{t}, \Lambda^{1:K}_{t})\big)\nonumber\\
	&= \inf_{u^{1:K}_t\in\prod_{k\in\mathcal{K}} \mathcal{U}_t^k }\bigg[ \int_{\mathscr{X}_t} \int_{\mathscr{X}_t} c_t(X_t,U^{1:K}_t) ~\rho~ \nonumber\\
	&\cdot \Pi_{t}(\Delta_{t}, \Lambda^{1:K}_{t})(X_t,\hat{X}_t)~ dX_t~d\hat{X}_t\nonumber\\
	&+ \int_{\mathscr{Y}_{t+1}} \int_{\mathscr{X}_{t+1}} \int_{\mathscr{X}_{t+1}} \int_{\mathscr{X}_{t}} \int_{\mathscr{X}_{t}} V_{t+1}\big(\Pi_{t+1}(\Delta_{t+1}, \Lambda^{1:K}_{t+1})\big) \nonumber\\
	& \cdot p(Y^{1:K}_{t+1}~|~X_{t+1})~ p(X_{t+1},\hat{X}_{t+1} ~|~ X_t, \hat{X}_t,U^{1:K}_t )~\rho \nonumber\\ \cdot &p(X_{t},\hat{X}_t ~|~ \Delta_{t}, \Lambda^{1:K}_t) dX_{t}~d\hat{X}_t~dX_{t+1}~d\hat{X}_{t+1}~dY^{1:K}_{t+1}\bigg]\nonumber\\
	&=\rho~V_t\big(\Pi_{t}(\Delta_{t}, \Lambda^{1:K}_{t})\big).
	\label{theo_concave:1f}	
\end{align}


\section{Proof of Theorem \ref{theo:concave}}\label{app:concave}
Starting with \eqref{theo2:1b}, we have
\begin{align}			
	&V_t\big(\Pi_{t}(\Delta_{t}, \Lambda^{1:K}_{t})\big)\nonumber\\
	&= \inf_{u^{1:K}_t\in\prod_{k\in\mathcal{K}} \mathcal{U}_t^k }\bigg[ \int_{\mathscr{X}_t} \int_{\mathscr{X}_t} c_t(X_t,U^{1:K}_t) ~
\end{align}	
\begin{align}
	&\cdot \Pi_{t}(\Delta_{t}, \Lambda^{1:K}_{t})(X_t,\hat{X}_t)~ dX_t~d\hat{X}_t\nonumber\\
	&+ \int_{\mathscr{Y}_{t+1}} \int_{\mathscr{X}_{t+1}} \int_{\mathscr{X}_{t+1}} \int_{\mathscr{X}_{t}} \int_{\mathscr{X}_{t}} V_{t+1}\big(\Pi_{t+1}(\Delta_{t+1}, \Lambda^{1:K}_{t+1})\big) \nonumber\\
	& \cdot p(Y^{1:K}_{t+1}~|~X_{t+1})~ p(X_{t+1},\hat{X}_{t+1} ~|~ X_t, \hat{X}_t,U^{1:K}_t ) \nonumber\\ \cdot &p(X_{t},\hat{X}_t ~|~ \Delta_{t}, \Lambda^{1:K}_t) dX_{t}~d\hat{X}_t~dX_{t+1}~d\hat{X}_{t+1}~dY^{1:K}_{t+1}\bigg],
	\label{theo_homo:1b}
\end{align}		
where $\mathscr{Y}_{t+1}=\otimes_{k\in\mathcal{K}}\mathscr{Y}^k$.

Choosing 
\begin{align}	
	&\rho =  \int_{\mathscr{X}_{t+1}} \int_{\mathscr{X}_{t+1}} \int_{\mathscr{X}_{t}}\int_{\mathscr{X}_{t}} p(Y^{1:K}_{t+1}~|~X_{t+1})~\nonumber\\
	&\cdot p(X_{t+1},\hat{X}_{t+1} ~|~ X_t,\hat{X}_{t}, U^{1:K}_t )\cdot p(X_{t},\hat{X}_{t} ~|~ \Delta_{t},\Lambda^{1:K}_t)~dX_{t}~\nonumber\\&\cdot d\hat{X}_{t}~dX_{t+1}~d\hat{X}_{t+1},\label{theo_concave:1rho}	
\end{align}
we can use the positive homogeneity of $V_t\big(\Pi_{t}(\Delta_{t}, \Lambda^{1:K}_{t})\big)$ (Lemma \ref{lemma:homo}) to write the second part of \eqref{theo_homo:1b} as follows
\begin{align}			
	&\int_{\mathscr{Y}_{t+1}} \int_{\mathscr{X}_{t+1}} \int_{\mathscr{X}_{t+1}} \int_{\mathscr{X}_{t}} \int_{\mathscr{X}_{t}} V_{t+1}\big(\Pi_{t+1}(\Delta_{t+1}, \Lambda^{1:K}_{t+1})\big) \nonumber\\
	& \cdot p(Y^{1:K}_{t+1}~|~X_{t+1})~ p(X_{t+1},\hat{X}_{t+1} ~|~ X_t, \hat{X}_t,U^{1:K}_t ) \nonumber\\ \cdot &p(X_{t},\hat{X}_t ~|~ \Delta_{t}, \Lambda^{1:K}_t) dX_{t}~d\hat{X}_t~dX_{t+1}~d\hat{X}_{t+1}~dY^{1:K}_{t+1}\nonumber
\end{align}	
\begin{align}
	&=\int_{\mathscr{Y}_{t+1}} V_{t+1}\big(\rho~ \Pi_{t+1}(\Delta_{t+1}, \Lambda^{1:K}_{t+1})\big)~dY^{1:K}_{t+1}\nonumber\\
	&=\int_{\mathscr{Y}_{t+1}} V_{t+1}\bigg(p(Y^{1:K}_{t+1}~|~X_{t+1}) ~\int_{\mathscr{X}_{t}} ~\int_{\mathscr{X}_{t}} p(X_{t+1},\nonumber\\
	&\hat{X}_{t+1} ~|~ X_t, \hat{X}_t, U^{1:K}_t ) \cdot  p(X_{t},\hat{X}_t ~|~ \Delta_{t},\Lambda^{1:K}_t)~dX_{t} \nonumber\\
	&~d\hat{X}_t	\bigg)~dY^{1:K}_{t+1},
	\label{theo_concave:1h}	
\end{align}
where, in the last equality, we substituted \eqref{theo_concave:1rho} and \eqref{eq:theo1j}.

Thus, we can write \eqref{theo_homo:1b} as
\begin{align}			
	&V_t\big(\Pi_{t}(\Delta_{t}, \Lambda^{1:K}_{t})\big)\nonumber\\
	&=  \inf_{u^{1:K}_t\in\prod_{k\in\mathcal{K}} \mathcal{U}_t^k }\bigg[ \int_{\mathscr{X}_t} \int_{\mathscr{X}_t} c_t(X_t,U^{1:K}_t) ~ \nonumber\\
	&\cdot \Pi_{t}(\Delta_{t}, \Lambda^{1:K}_{t})(X_t,\hat{X}_t)~ dX_t~d\hat{X}_t\nonumber\\
	&+\int_{\mathscr{Y}_{t+1}} V_{t+1}\bigg(p(Y^{1:K}_{t+1}~|~X_{t+1}) ~\int_{\mathscr{X}_{t}} ~\int_{\mathscr{X}_{t}} p(X_{t+1},\nonumber\\
	&\hat{X}_{t+1} ~|~ X_t, \hat{X}_t, U^{1:K}_t ) \cdot  p(X_{t},\hat{X}_t ~|~ \Delta_{t},\Lambda^{1:K}_t)~dX_{t} \nonumber\\
	&~d\hat{X}_t	\bigg)~dY^{1:K}_{t+1} \bigg].
	\label{theo_concave:1hh}	
\end{align}		

The remainder of the proof follows by induction. 
Suppose that $V_{t+1}\big( \Pi_{t+1}(\Delta_{t+1}, \Lambda^{1:K}_{t+1})\big)$ is concave. Since
\begin{align}
	&V_{t+1}\bigg(p(Y^{1:K}_{t+1}~|~X_{t+1}) ~\int_{\mathscr{X}_{t}} ~\int_{\mathscr{X}_{t}} p(X_{t+1},\nonumber\\
	&\hat{X}_{t+1} ~|~ X_t, \hat{X}_t, U^{1:K}_t ) \cdot  p(X_{t},\hat{X}_t ~|~ \Delta_{t},\Lambda^{1:K}_t)~dX_{t}~d\hat{X}_t	\bigg),
	\label{theo_concave:1i}	
\end{align}
is the composition of a concave function and increasing linear function, it follows that it is concave. However, concavity is preserved by integration (see \cite{boyd2004}, p. 79), hence 
\begin{align}			
	&\int_{\mathscr{Y}_{t+1}} V_{t+1}\bigg(p(Y^{1:K}_{t+1}~|~X_{t+1}) ~\int_{\mathscr{X}_{t}} ~\int_{\mathscr{X}_{t}} p(X_{t+1},\nonumber\\
	&\hat{X}_{t+1} ~|~ X_t, \hat{X}_t, U^{1:K}_t ) \cdot  p(X_{t},\hat{X}_t ~|~ \Delta_{t},\Lambda^{1:K}_t)~dX_{t} \nonumber\\
	&~d\hat{X}_t	\bigg)~dY^{1:K}_{t+1}.
	\label{theo_concave:1j}	
\end{align}
is concave.
Since the pointwise infimum of concave functions is concave, \eqref{theo_concave:1hh} is concave.

\bibliographystyle{abbrvnat}
\bibliography{TAC_Ref_structure, TAC_Ref_IDS, TAC_Ref_Andreas,TAC2_learn,TAC_learn_Andreas}


\begin{wrapfigure}{l}{0.10\textwidth}
	\centering
	\includegraphics[width=0.13\textwidth]{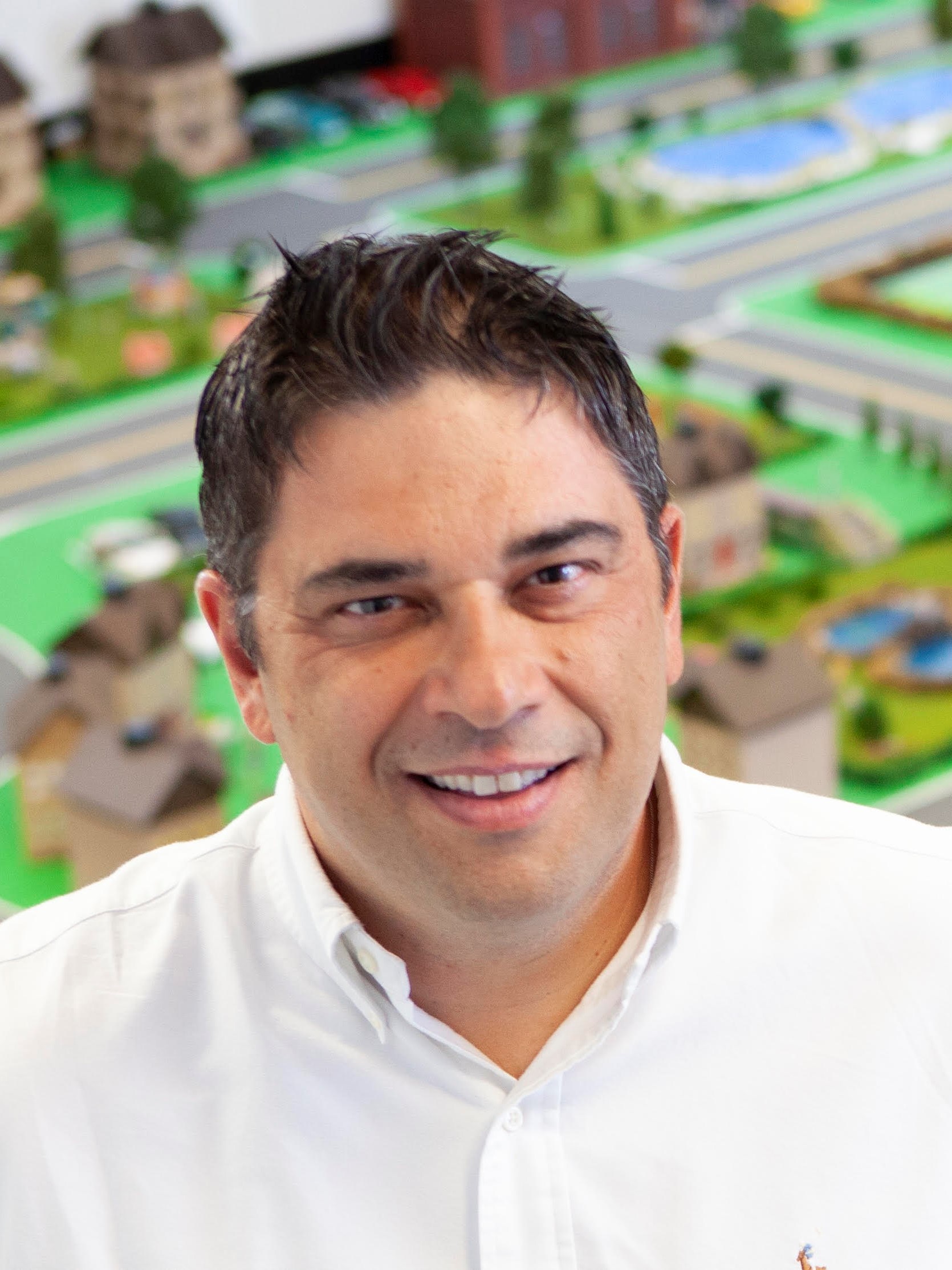}
\end{wrapfigure}
\textbf{Andreas A. Malikopoulos}  received the Diploma in mechanical engineering from the National Technical University of Athens, Greece, in 2000. He received M.S. and Ph.D. degrees from the department of mechanical engineering at the University of Michigan, Ann Arbor, Michigan, USA, in 2004 and 2008, respectively. 
He is the Terri Connor Kelly and John Kelly Career Development Associate Professor in the Department of Mechanical Engineering at the University of Delaware, the Director of the Information and Decision Science (IDS) Laboratory, and the Director of the Sociotechnical Systems Center. Prior to these appointments, he was the Deputy Director and the Lead of the Sustainable Mobility Theme of the Urban Dynamics Institute at Oak Ridge National Laboratory, and a Senior Researcher with General Motors Global Research \& Development. His research spans several fields, including analysis, optimization, and control of cyber-physical systems; decentralized systems; stochastic scheduling and resource allocation problems; and learning in complex systems. The emphasis is on applications related to smart cities, emerging mobility systems, and sociotechnical systems. He has been an Associate Editor of the IEEE Transactions on Intelligent Vehicles and IEEE Transactions on Intelligent Transportation Systems from 2017 through 2020. He is currently an Associate Editor of Automatica and IEEE Transactions on Automatic Control. He is a member of SIAM and AAAS. He is also a Senior Member of the IEEE and a Fellow of the ASME.
\end{document}